\documentclass[a4paper,11 pt]{amsart}

%Tamaños

%\linespread{1.2}
%
%\oddsidemargin=1cm
%
%\evensidemargin=1cm
%
%\topmargin=-1cm
%
%\textwidth=14cm
%
%\textheight=26cm
%
%\parskip 5pt
%\parindent 0.5pt

%\usepackage[left=1in,top=1in,right=1in,bottom=1in,nohead]{geometry}

\usepackage{amssymb}  %,amscd,amsthm,mathrsfs}
\usepackage{mathtools}      % improves amsmath
\usepackage{mathabx}        % makes many symbols (as $\leq$) more beautiful; must be loaded after mathtools 
\usepackage[bb=fourier,cal=euler,scr=rsfs]{mathalfa}	% selecting fancy math fonts
\usepackage{enumitem}       % special itemize with custom tags and labels that work 
\usepackage[colorlinks=true,backref=page]{hyperref} % Links in the pdf. Backref option: each bibliographical entry denotes where it was cited

\usepackage[ansinew]{inputenc}
%\usepackage[centertags]{amsmath}

% %%%%%%%%%%% changing backref options (each bibliographical entry indicates where it wa cited)
\renewcommand*{\backref}[1]{}
\renewcommand*{\backrefalt}[4]{\quad \tiny
  \ifcase #1 (\textbf{NOT CITED.})%
  \or    (Cited on page~#2.)%
  \else   (Cited on pages~#2.)%
  \fi}

%%%%%%%% Labelling settings
\setcounter{tocdepth}{2}    % table of contents
\setcounter{secnumdepth}{3}
\hypersetup{bookmarksdepth = 3} % Depth of sections/subs... to have bookmark links in the pdf;
                % requires the package hyperref.
                % If not specified, the value tocdepth is used.
\numberwithin{equation}{section}     % Makes labeled equations easier to find.

%%%%%%%% Global choices for lists (enumitem package)
\setlist[enumerate,1]{label={\upshape(\roman*)},ref=\roman*}
\setlist[enumerate,2]{label={\upshape(\alph*)},ref=\alph*}

%Letras

 \def\RR{{\mathbb R}}  
   
 \def\ZZ{{\mathbb Z}}

    \def\cW{\mathcal{W}}
\def\cF{\mathcal{F}}  \def\cL{\mathcal{L}}

%Teoremas

\newtheorem*{teo*}{Theorem}

\newtheorem{teo}{Theorem}[section]

\newtheorem{lema}[teo]{Lemma}
\newtheorem{prop}[teo]{Proposition}
\newtheorem{add}[teo]{Addendum}

\newtheorem{thmintro}{Theorem}

\theoremstyle{definition}

\theoremstyle{remark}
\newtheorem{obs}[teo]{Remark}

%%%
% \newcommand{\bi}{\begin{itemize}}
% \newcommand{\ei}{\end{itemize}}
% \newcommand{\itm}[1]{\item [(#1)]}

%Pruebas y fin de pruebas

% Algunos comandos

\newcommand{\mt}{\widetilde M}
\newcommand{\ft}{\widetilde f}

% MARGENES y otras cosas para el formato

 %Este hace un margen
 %Este es por si sale del lado izquierdo

\title[Partially hyperbolic diffeomorphisms homotopic to identity]{Partially hyperbolic diffeomorphisms homotopic to the identity on 3-manifolds \\ {\tiny RESEARCH ANNOUNCEMENT}}
\author[T.~Barthelm\'e]{Thomas Barthelm\'e}
\address{Queen's University, Kingston, ON}
\email{thomas.barthelme@queensu.ca}
\urladdr{sites.google.com/site/thomasbarthelme}

\author[S.~Fenley]{Sergio R.\ Fenley} 
\address{Florida State University, Tallahassee, FL 32306 \and \newline \indent Princeton University, Princeton, NJ 08540, USA}
%\curraddr{Princeton University, Princeton, NJ 08540, USA}
\email{fenley@math.fsu.edu}

\author[S.~Frankel]{Steven Frankel} 
\address{Washington University in St Louis, St Louis, MO}
\email{steven.frankel@wustl.edu}

\author[R.~Potrie]{Rafael Potrie} 
\address{Centro de Matem\'atica, Universidad de la Rep\'ublica, Uruguay}
\email{rpotrie@cmat.edu.uy}
\urladdr{http://www.cmat.edu.uy/~rpotrie/}

\thanks{ R.~Potrie was partially supported by CSIC group 618. He also acknowledges the hospitality of Yale University and Laboratoire Math.~d'Orsay (funded by CNRS and IFUM) during part of the preparation of this work. S.~Fenley was partially supported by a grant from the Simons foundation. S.~Frankel supported by the National Science Foundation under Grant No. DMS-1611768. Any opinions, findings, and conclusions or recommendations expressed in this material are those of the authors and do not necessarily reflect the views of the National Science Foundation.}

\keywords{Partial hyperbolicity, 3-manifold topology, foliations, classification.}
\subjclass[2010]{37D30,57R30,37C15,57M50,37D20}
\begin{document}

\begin{abstract}
We announce some results towards the classification of partially hyperbolic diffeomorphisms on $3$-manifolds, and outline the proofs in the case when the diffeomorphism is dynamically coherent. Detailed proofs are long and technical and will appear later. 
% 
% \bigskip
% 
% \noindent {\bf Keywords: } 
% 
% \medskip
% 
% \noindent {\bf MSC 2010:}
\end{abstract}

\maketitle

\section{Introduction}

A diffeomorphism $f$ of a $3$-manifold $M$ is \emph{partially hyperbolic} if it preserves a splitting of the tangent bundle $TM$ into three $1$-dimensional sub-bundles
\[ TM = E^s \oplus E^c \oplus E^u, \]
where the \emph{stable bundle} $E^s$ is eventually contracted, the \emph{unstable bundle} $E^u$ is eventually expanded, and the \emph{central bundle} $E^c$ is distorted less than the stable and unstable bundles at each point. That is, one has
\begin{align*}
					&\|Df^n|_{E^s(x)}\| < 1,\\
					&\|Df^n|_{E^u(x)}\| > 1,\\
	\text{and }		\|Df^n|_{E^s(x)}\| < &\|Df^n|_{E^{c}(x)}\| < \|Df^n|_{E^u(x)}\|
\end{align*}
for some $n \geq 1$ and all $x \in M$. See \cite{BDV,CHHU,HP-survey,CP,Wilkinson} for broad introductions to the subject. 

From a dynamical perspective, the interest in partial hyperbolicity stems from its appearance as a generic consequence of certain dynamical conditions, such as stable ergodicity and robust transitivity (see \cite{BDV, Wilkinson}). For example, recall that a diffeomorphism is \emph{transitive} if it has a dense orbit, and \emph{robustly transitive} if this behavior persists under $C^1$-small deformations. By \cite{DPU}, any robustly transitive diffeomorphism on a $3$-manifold is ``weakly'' partially hyperbolic.

From a geometric perspective, one can think of partial hyperbolicity as a generalization of the discrete behavior of Anosov flows, which feature prominently in the theory of $3$-manifolds. Recall that a flow $\Phi$ on a $3$-manifold $M$ is \emph{Anosov} if it preserves a splitting of the unit tangent bundle $TM$ into three $1$-dimensional sub-bundles
\[ TM = E^s \oplus E^c \oplus E^u, \]
where $E^c = T\Phi$ is the tangent direction to the flow, $E^s$ is eventually exponentially contracted, and $E^u$ is eventually exponentially expanded. After flowing for a fixed time, an Anosov flow generates a partially hyperbolic diffeomorphism of a particularly simple type, where the stable and unstable bundles are contracted uniformly, and the central direction is left undistorted. More generally, one can construct partially hyperbolic diffeomorphisms of the form $f(x) = \Phi_{t(x)}(x)$ where $\Phi$ is an Anosov flow and $t$ is a real-valued function; the diffeomorphisms obtained in this way are called \emph{discretized Anosov flows}.

In this article we announce a series of results towards the classification of partially hyperbolic diffeomorphisms in dimension $3$. In particular, we show that many partially hyperbolic diffeomorphisms can be identified with discretized Anosov flows. This is motivated by Pujals' conjecture which asserted that every partially hyperbolic diffeomorphism is a deformation of either a discretized Anosov flow or a certain kind of algebraic example \cite{BW}.

\subsection{Homotopy, integrability, and conjugacy}
There are two important obstructions to identifying a partially hyperbolic diffeomorphism with a discretized Anosov flow. The first comes from the fact that the latter are homotopic to the identity, while the former may be homotopically nontrivial. Examples include Anosov diffeomorphisms on the $3$-torus with distinct eigenvalues, ``skew products,'' and the counterexamples to the Pujals' conjecture constructed in \cite{BPP,BGP,BZ,BGHP}.

The second major obstruction comes from the integrability of the bundles in a partially hyperbolic spliting. In the context of an Anosov flow $\Phi$, the stable and unstable bundles $E^s$ and $E^u$ integrate uniquely into a transverse pair of $1$-dimensional foliations, the \emph{stable} foliation $\cW^{s}$ and \emph{unstable} foliation $\cW^{u}$. In fact, even the \emph{weak stable} and \emph{weak unstable} bundles $E^c \oplus E^s$ and $E^c \oplus E^u$ integrate uniquely into a transverse pair of $\Phi$-invariant $2$-dimensional foliations, the \emph{weak stable} foliation $\cW^{cs}$ and \emph{weak unstable} foliation $\cW^{cu}$.

% One can use this picture to gain a qualitative understanding of the behavior of an Anosov flow: two orbits in the
% same weak stable leaf are forwards asymptotic in the sense that they contain forward orbits that are arbitrarily
% close to each other. In addition, the points where those flowlines intersect the same strong stable leaf are
% forwards asymptotic under the flow; similar statements hold for orbits in the same weak unstable leaf.

In the context of a partially hyperbolic diffeomorphism $f$, the stable and unstable bundles still integrate uniquely into a pair of foliations $\cW^{s}$ and $\cW^{u}$ (cf. \cite{HP-survey} and references therein). However, the \emph{central stable} and \emph{central unstable} bundles $E^c \oplus E^s$ and $E^c \oplus E^u$ may fail to be uniquely integrable. In fact, there are examples where it is impossible to find \emph{any} $f$-invariant $2$-dimensional foliation tangent to the central stable or central unstable bundle; see \cite{HHU-noncoherent} (in ${\bf T}^3$)
 and \cite{BGHP} (in $T^1 S, S$ a hyperbolic surface).

If one can find a pair of $f$-invariant foliations tangent to the central stable and central unstable bundles then $f$ is said to be \emph{dynamically coherent}. This condition is obviously necessary for $f$ to be conjugate to a discretized Anosov flow, so we must either restrict attention to dynamically coherent diffeomorphisms, or show that dynamical coherence follows from other considerations.

\subsection{Results}
Most of the existing progress towards classifying partially hyperbolic diffeomorphisms takes an outside-in approach, restricting attention to particular classes of manifolds, and comparing to an \emph{a priori} known model partially hyperbolic (see \cite{CHHU,HP-survey} for recent surveys). In particular, partially hyperbolic diffeomorphisms have been completely classified in manifolds with solvable or virtually solvable fundamental group \cite{HP-Nil,HP-Sol}.

Ours is an inside-out approach, using the theory of foliations to understand the way the local structure that defines partial hyperbolicity is pieced together into a global picture. We then relate the dynamics of these foliations to the large-scale structure of the ambient manifold. In particular, we make use of several tools that had been previously employed to study Anosov, pseudo-Anosov, and quasigeodesic flows (see e.g.~\cite{BarbotFeuill,Barbot,Calegari,TBF1,TBF2,BarthelmeFenley14,BarthelmeFenley17,FenleyAnosov,Fen1998,Fen2002,Fen2003,Frankel} ). An advantage of this method is that, since it does not rely on a model partially hyperbolic to compare to, we can consider any manifold, not just one where an Anosov flow is known to exist.

The following two theorems are the main consequences of our work, applied to two of the major classes of $3$-manifolds. Note that the classification of partially hyperbolic diffeomorphisms is always considered up to finite iterates, since one can easily build infinitely many different but uninteresting examples by taking finite covers.

\begin{thmintro}[Seifert manifolds] \label{thmA}
	Let $f\colon M \to M$ be a partially hyperbolic diffeomorphism on a closed Seifert fibered $3$-manifold. If $f$ is homotopic to the identity, then after passing to a finite cover, some iterate of $f$ is conjugate to a discretized Anosov flow. 
\end{thmintro}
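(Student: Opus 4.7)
The plan is foliation-theoretic, following the inside-out strategy sketched in the introduction, and splits into a setup-and-reduction step, a dynamical coherence step (the main obstacle), and a step that manufactures the Anosov flow from the resulting center foliation.

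First I would reduce to the case where the base orbifold of the Seifert fibration is hyperbolic: if the base is spherical or Euclidean, $\pi_1(M)$ is virtually solvable and the theorem already follows from the classification of \cite{HP-Nil,HP-Sol}. After passing to a finite cover, one may assume $M$ is an honest circle bundle over a closed hyperbolic surface $S$, so that the universal cover is $\widetilde{M} \cong \mathbb{H}^2 \times \mathbb{R}$ and the Seifert fiber generates a central $\mathbb{Z}$-subgroup $\langle \gamma_0 \rangle$ of $\pi_1(M)$. Because $f$ is homotopic to the identity, there is a canonical lift $\widetilde{f}\colon \widetilde{M}\to\widetilde{M}$ that commutes with every deck transformation and lies at bounded $\widetilde{M}$-distance from the identity. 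Applying Burago--Ivanov to $f$ produces $f$-invariant branching foliations $\cW^{cs}$ and $\cW^{cu}$ tangent to $E^c\oplus E^s$ and $E^c\oplus E^u$; I would lift these to $\widetilde{M}$ and study their leaf spaces $\mathcal{L}^{cs}$ and $\mathcal{L}^{cu}$.

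The hard step is to promote these branching foliations to genuine foliations, i.e.\ to establish dynamical coherence. The strategy would be to exploit the two sources of rigidity simultaneously. On one hand, the central element $\gamma_0$ acts on $\mathcal{L}^{cs}$ and $\mathcal{L}^{cu}$ by homeomorphisms commuting with every other deck action; on the other hand, since $\widetilde{f}$ is bounded distance from the identity, it moves each leaf of the lifted branching foliation a bounded combinatorial amount in its leaf space. Projecting the branching foliation to the hyperbolic base $S$ produces an equivariant one-dimensional structure whose large-scale geometry---constrained by the Gromov-hyperbolicity of $S$ and the rich action of $\pi_1(S)$---ought to preclude the ``merging'' of leaves. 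In particular, the mechanisms producing the known non-coherent examples of \cite{HHU-noncoherent,BGHP} all depend on a nontrivial action on the ends of leaves, which in our setting is forbidden by the homotopy-to-identity hypothesis. This is where essentially all of the technical work will live, and I expect it to be the main obstacle.

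Once coherence is established, $\cW^{c} := \cW^{cs}\cap \cW^{cu}$ is a one-dimensional $f$-invariant foliation. A leaf-space analysis in the spirit of \cite{BarbotFeuill,Barbot} should show that $\cW^{cs}$ and $\cW^{cu}$ are $\mathbb{R}$-covered and uniform, from which it follows that each center leaf in $\widetilde{M}$ is quasi-isometric to a vertical line. Since $\widetilde{f}$ lies at bounded distance from the identity and preserves the center foliation, it must (after passing to an iterate) preserve each individual center leaf and translate along it by a bounded amount; interpolating $\widetilde{f}$ linearly along these leaves yields a $\pi_1(M)$-equivariant flow $\widetilde{\Phi}_t$, descending to a flow $\Phi$ on $M$ whose time-one map differs from $f$ only by a reparametrization along center leaves. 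Finally, the exponential contraction of $Df^n$ on $E^s$ and expansion on $E^u$ transfer, through the bounded defect between $f$ and $\Phi_1$, to uniform exponential rates for $\Phi$, so $\Phi$ is genuinely Anosov and $f$ is conjugate to a discretized Anosov flow in its class.
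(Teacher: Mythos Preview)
Your proposal has two genuine gaps, and it misses the mechanism that actually makes the Seifert case work.

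First, your ``hard step'' is not an argument: you say the Gromov-hyperbolicity of the base and the $\pi_1(S)$-action ``ought to preclude merging'' of branching leaves, but you give no mechanism. The known non-coherent examples of \cite{BGHP} live precisely on Seifert manifolds with hyperbolic base, so hyperbolicity of $S$ alone certainly does not rule out merging; what rules it out in those examples is the homotopy class of $f$, and you have not explained how the homotopy-to-identity hypothesis enters your merging argument. The paper takes the opposite route: it never tries to prove coherence directly. It works with branching foliations throughout, proves that every (branching) center leaf is fixed by the good lift, and only then deduces that the branching foliations are honest foliations.

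Second, even granting coherence, your claim that ``since $\widetilde f$ lies at bounded distance from the identity and preserves the center foliation, it must (after passing to an iterate) preserve each individual center leaf'' is exactly the statement that requires proof, and bounded distance from the identity is not enough: distinct center leaves can stay a bounded distance apart forever, so $\widetilde f$ could a priori permute them. This is the ``double translation'' scenario the paper has to exclude. The Seifert-specific trick you are missing is Proposition~\ref{p.changelift}: because $\pi_1(M)$ has nontrivial center $\langle\mathfrak c\rangle$, one can replace $\widetilde f$ by $\mathfrak c^k\circ\widetilde f^{\,\ell}$ and \emph{force} the new lift to fix a leaf of $\widetilde\cW^{cs}$; the dichotomy (Proposition~\ref{p.dichotomy}) then forces it to fix every leaf. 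Doing the same for $\widetilde\cW^{cu}$ gives two lifts, and Proposition~\ref{prop.Key} (periodic centers are coarsely contracting for one lift, coarsely expanding for the other) shows the two lifts must agree. A second application of Proposition~\ref{prop.Key} then fixes every center leaf. Your outline bypasses this entire structure and replaces it with an unsupported assertion.
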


Note that the preceding theorem does not assume dynamical coherence, nor does it use that Anosov flows on Seifert fibered $3$-manifolds are classified \cite{Ghys,Barbot}. A weaker version of this theorem was recently announced by Ures \cite{Ures}, with the additional assumption that $f$ is isotopic, through partially hyperbolic diffeomorphisms, to the time-$1$ map of an Anosov flow.

\begin{thmintro}[Hyperbolic manifolds] \label{thmB}
	Let $f\colon M \to M$ be a partially hyperbolic diffeomorphism on a closed hyperbolic $3$-manifold. If $f$ is dynamically coherent, then some iterate of $f$ is conjugate to a discretized topological Anosov flow.
\end{thmintro}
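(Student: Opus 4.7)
The plan is to exploit the geometric rigidity of closed hyperbolic 3-manifolds and combine it with the authors' prior foliation-theoretic techniques for Anosov and pseudo-Anosov flows. Since $\mathrm{MCG}(M)$ is finite by Mostow rigidity, after replacing $f$ by a suitable iterate we may assume $f$ is isotopic to the identity. Lifting to the universal cover $\widetilde{M}\cong\mathbb{H}^3$, we pick the deck-equivariant lift $\widetilde{f}$ at bounded distance from the identity; this lift extends continuously to the ideal boundary $S^2_\infty$ as the identity map. Dynamical coherence gives $f$-invariant foliations $\cW^{cs}$ and $\cW^{cu}$, whose transverse intersection is a one-dimensional center foliation $\cW^c$; all three lift to $\widetilde{f}$-invariant foliations of $\widetilde{M}$.

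The next step is to pin down the global structure of these foliations. Standard partially hyperbolic arguments, combined with the absence of incompressible tori in $M$, show that $\cW^{cs}$ and $\cW^{cu}$ are Reebless and in fact taut; by Novikov--Palmeira their leaves lift to properly embedded planes in $\widetilde{M}$ with simply connected (possibly non-Hausdorff) leaf spaces on which $\pi_1(M)$ acts. Applying the authors' earlier work on the ideal boundary theory of leaves of taut foliations in hyperbolic 3-manifolds, each leaf is Gromov-hyperbolic in its induced path metric and has a well-defined set of ideal endpoints in $S^2_\infty$ that depends equivariantly on the leaf. The crucial step is then to show that $\widetilde{f}$ preserves each leaf of $\widetilde{\cW}^{cs}$ and $\widetilde{\cW}^{cu}$ setwise: since $\widetilde{f}$ is the identity on $S^2_\infty$ it preserves endpoint sets, and ruling out the exotic configurations---non-separated leaves sharing endpoints, perfect fits, branching towards infinity---then forces each leaf to be invariant. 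Intersecting, every center leaf is fixed setwise by $\widetilde{f}$.

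With leafwise invariance in hand, the remaining task is to promote $\cW^c$ to a topological Anosov flow whose time-$t(x)$ map is $f$. After a further iterate making $E^c$ orientable, the bounded-distance property of $\widetilde{f}$ together with the product structure inside each center-stable leaf yields a continuous function $t\colon M\to\mathbb{R}$ and a continuous parametrization of $\cW^c$ by a flow $\Phi$ such that $\widetilde{f}(x)=\widetilde{\Phi}_{t(x)}(x)$. To verify that $\Phi$ is topologically Anosov one needs expansivity and the transverse hyperbolic structure: two distinct orbits in the same center-stable (resp.\ center-unstable) leaf are collapsed by forward (resp.\ backward) iteration, which follows from uniform contraction of $\cW^{s}$ (resp.\ $\cW^{u}$), and the transverse foliations $\cW^s, \cW^u$ play the role of the strong stable/unstable foliations of $\Phi$.

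The main obstacle I expect is the leafwise-invariance step. Turning the ``identity at infinity'' property of $\widetilde{f}$ into honest leaf preservation requires the full machinery of ideal boundaries of leaves in the hyperbolic setting, precisely because a priori the foliation leaves could share endpoints at $S^2_\infty$ or branch in ways that accommodate a nontrivial permutation of leaves by $\widetilde{f}$. Replaying the analysis of perfect fits, non-separated leaves, and asymptotic behavior---originally developed for (pseudo-)Anosov flows by the authors---in this purely partially hyperbolic context, without the benefit of a flow being given in advance, is the heart of the argument.
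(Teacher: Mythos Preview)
Your proposal diverges from the paper's argument at exactly the point you identify as the main obstacle, and the route you suggest there does not work. You want to deduce leafwise invariance of $\widetilde{\cW}^{cs}$ and $\widetilde{\cW}^{cu}$ from the fact that $\widetilde{f}$ acts as the identity on $S^2_\infty$, by showing that leaves are determined by their ``endpoint sets'' once one rules out non-separated leaves, perfect fits, and branching. But the hard case is precisely the opposite of what you anticipate: when the foliation is $\RR$-covered and uniform (Hausdorff leaf space, no branching, no perfect fits at all), \emph{every} leaf has limit set the entire sphere $S^2_\infty$ (this follows from Fenley's work on limit sets of foliations in hyperbolic $3$-manifolds, and more structurally from the universal-circle picture). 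So the identity-at-infinity property places no constraint whatsoever on how $\widetilde{f}$ permutes leaves, and it is perfectly consistent with $\widetilde{f}$ acting as a nontrivial translation on the leaf space. The ``exotic configurations'' you worry about are in fact the \emph{easy} side of the dichotomy; it is the topologically tame $\RR$-covered uniform case that your endpoint argument cannot touch.

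The paper handles this as follows. A general dichotomy (Proposition~\ref{p.dichotomy}) says that for each of $\widetilde{\cW}^{cs}$, $\widetilde{\cW}^{cu}$, either $\widetilde{f}$ fixes every leaf, or the foliation is $\RR$-covered and uniform and $\widetilde{f}$ translates the leaf space. One must then exclude the mixed case and the double-translation case. For the double-translation case the paper invokes the Thurston--Calegari--Fenley theorem producing a \emph{regulating transverse pseudo-Anosov flow} for an $\RR$-covered uniform foliation in a hyperbolic $3$-manifold, and compares the dynamics of $f$ with that of the flow. A Lefschetz-index argument on a periodic center-stable leaf produces a coarsely \emph{expanding} periodic center leaf (Proposition~\ref{prop-translation-expanding}), while a finer analysis of ideal-boundary dynamics on that leaf shows no coarsely \emph{contracting} periodic center leaf can exist (Proposition~\ref{prop-contracting-nonDC}); applying these to the two foliations yields a contradiction. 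This step genuinely uses dynamical coherence and is the technical core of Theorem~\ref{thmB}. Only after double invariance is established does the argument proceed, as you outline, to fix center leaves and build the discretized topological Anosov flow.
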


Recall that most closed irreducible $3$-manifolds are hyperbolic; this was shown by Thurston, and follows for a more general sense of ``most'' from Perelman's proof of the Geometrization Conjecture. Note that Theorem~\ref{thmB} does not need to assume that $f$ is homotopic to the identity, since any homeomorphism on a closed hyperbolic $3$-manifold has a finite power that is homotopic to the identity. There are also more general versions of this theorem that do not require dynamical coherence, but we shall not state them here as they are more technical.

In Theorem~\ref{thmB}, the discretized Anosov flow that we construct is modelled on a \emph{topological} Anosov flow. It is currently unknown whether these can be taken to be smooth, though it is conjectured in general that every topological Anosov flow is orbit equivalent to a smooth Anosov flow \cite{BW}. In Theorem~\ref{thmA}, we make use of the fact that this is true for flows on Seifert fibered manifolds \cite{Brunella}.

The remainder of this announcement is organized as follows: In \S2-3 we develop the general theory of partially hyperbolic diffeomorphisms homotopic to the identity. We then specialize to Seifert manifolds in \S4, and to hyperbolic manifolds in \S5. In \S6 we discuss further work and the limits of our arguments.

\section{General Outline} 
Theorems \ref{thmA} and \ref{thmB} are best understood when seeing how they follow from our general study of partially hyperbolic diffeomorphisms homotopic to the identity. Many parts of the study hold in any 3-manifold whose fundamental group is not (virtually) solvable (where complete classification results are already available, see \cite{BI,Parwani,HP-Nil,HP-Sol}). We describe next these intermediate results.

For clarity, we will mainly discuss the dynamically coherent versions of our arguments, Many of these extend to the general case using the \emph{branching foliations} introduced in \cite{BI}, though in some places (notably, parts of Theorem~\ref{thmB}) the use of dynamical coherence is crucial; this has to do with phenomena very similar to the one appearing in recently obtained non-dynamically coherent examples \cite{BGHP}. 

For the remainder of this article, we fix a closed Riemannian $3$-manifold $M$ and a partially hyperbolic diffeomorphism $f \colon M \to M$ homotopic to the identity. Unless otherwise stated, $f$ will always be dynamically coherent, with invariant foliations $\cW^{\sigma}$ tangent to $E^\sigma$ where $\sigma=s,c,u,cs,cu$. This lifts to a diffeomorphism $\widetilde{f}$ on the universal cover $\widetilde M$, which leaves invariant the lifted foliations $\widetilde \cW^{\sigma}$. 

Since $f$ is homotopic to the identity, we can assume that the lift $\ft$ has the following properties:
\begin{itemize}
\item[(L1)] $\ft$ is bounded distance from the identity: There exists $K>0$ such that $d(x, \ft(x))< K$ for every $x\in \mt$. 
\item [(L2)] $\ft$ commutes with every deck transformation (which we identify with $\pi_1(M) \subset \mathrm{Isom}(\mt)$). 
\end{itemize}

\begin{obs}
	Such a lift can always be obtained by lifting an homotopy from the identity to $f$. Notice however that the choice of $\ft$ might not be unique (this will be important for Seifert manifolds). Whenever we write $\ft$ we will be assuming that $\ft$ is a lift that verifies both properties. This lift is fixed throughout.
\end{obs}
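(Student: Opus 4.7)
The plan is to build $\ft$ by lifting a homotopy from $\mathrm{id}_M$ to $f$, and then to verify (L1) and (L2) separately using compactness of $M$ and the uniqueness of path lifting. First I would fix a homotopy $H\colon M\times [0,1]\to M$ with $H_0=\mathrm{id}_M$ and $H_1=f$, compose it with the covering projection $\mt\times[0,1]\to M\times[0,1]$, and apply the homotopy lifting property with initial condition $\mathrm{id}_{\mt}$ at $t=0$. This produces a unique continuous lift $\tilde H\colon \mt\times[0,1]\to \mt$ with $\tilde H_0=\mathrm{id}_{\mt}$, and I would set $\ft:=\tilde H_1$.

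For (L2), I would fix a deck transformation $\gamma\in\pi_1(M)$ and compare the two continuous maps $(x,t)\mapsto \tilde H(x,t)$ and $(x,t)\mapsto \gamma^{-1}\tilde H(\gamma x,t)$ from $\mt\times[0,1]$ to $\mt$. Both project to $H$ under the covering map, and both are equal to $\mathrm{id}_{\mt}$ at $t=0$. Uniqueness of path lifting, applied to the path $t\mapsto \tilde H(x,t)$ for each fixed $x$, shows that they coincide, and evaluating at $t=1$ yields $\ft\circ\gamma=\gamma\circ\ft$.

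For (L1), let $D\subset\mt$ be a compact fundamental domain and set $K:=\sup_{y\in D,\, t\in[0,1]}d(y,\tilde H_t(y))+1$, which is finite by continuity and compactness. Any $x\in\mt$ can be written as $\gamma y$ with $y\in D$ and $\gamma\in\pi_1(M)$, and then by (L2) and the fact that deck transformations act isometrically,
\[ d(x,\ft(x))=d(\gamma y,\gamma\ft(y))=d(y,\ft(y))\le K. \]

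Finally, for the non-uniqueness: if $\gamma$ lies in the center $Z(\pi_1(M))$, then $\gamma\circ\ft$ still commutes with every deck transformation and still has bounded displacement, so it is another admissible lift. Seifert fibered manifolds have an infinite cyclic central subgroup generated by a regular fiber, giving infinitely many valid choices of $\ft$, corresponding to homotopies from $\mathrm{id}_M$ to $f$ that differ by loops around the fiber. There is no serious obstacle here: the only subtlety is specifying the correct initial lift of the homotopy, which is precisely the content of the uniqueness statement for path lifting.
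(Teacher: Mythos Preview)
Your argument is correct and follows exactly the route the paper indicates: the remark in the paper simply asserts that one lifts a homotopy from the identity to $f$, and you have supplied the standard covering-space details (homotopy lifting for the construction, uniqueness of lifts for (L2), compact fundamental domain plus (L2) for (L1)). One tiny point you leave implicit in the non-uniqueness discussion is why a central $\gamma$ has bounded displacement on $\mt$; this follows because $x\mapsto d(x,\gamma x)$ is then $\pi_1(M)$-invariant and hence descends to the compact manifold $M$.
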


In this announcement, we will further assume that the foliations $\cW^{cs}$ and $\cW^{cu}$ are \emph{$f$-minimal}. This means that $M$ is the only set that is closed, saturated by the foliation, $f$-invariant and non empty. The difference with the usual notion of minimality of foliations is that we require the set to be $f$-invariant. This hypothesis simplifies several arguments but is not needed in certain cases (for instance when the manifold is Seifert or hyperbolic, although it requires some additional non-trivial arguments). Notice that this hypothesis is always verified in many important, from a dynamical standpoint, cases e.g.~when $f$ is transitive or volume preserving. 

Our main goal is to show that every leaf of both foliations $\widetilde \cW^{cs}$ and $\widetilde \cW^{cu}$ are fixed by $\ft$ and the same holds for the connected components of their intersections (i.e.~the leaves of $\widetilde \cW^c$). Once this is obtained, it is not difficult to show that $f$ should be leaf conjugate to a (topological) Anosov flow (very similar arguments already appear in \cite{BW}). 

Notice that invariance of $\widetilde \cW^\sigma$ means that if $L$ is a leaf of $\widetilde \cW^\sigma$ then so is $\ft(L)$. Showing that leaves are fixed means that for every $L \in \widetilde \cW^\sigma$ one has that $\ft(L)=L$.

\subsection{Dichotomy for foliations}
A foliation $\cF$ on $M$ is said to be \emph{$\RR$-covered} if the leaf space of the lifted foliation $\mt/_{\widetilde \cF}$ in the universal cover is homeomorphic to $\RR$. In general if $\cF$ is Reebless (for example if it does not have compact leaves), then $\mt/\cF$ is a simply connected one dimensional manifold, but usually it is \emph{not} Hausdorff \cite{Nov,Bar98}.
The foliation $\cF$ is called \emph{uniform}
if every pair of leaves of $\widetilde \cF$ 
in $\mt$ is a bounded Hausdorff distance apart  \cite{Thurston,CalegariPA,Fen2002}. The bound obviously depends on the particular pair.

Assuming that the foliations $\cW^{cs}$ and $\cW^{cu}$ are 
$f$-minimal in $M$ we first show that: 

\begin{prop}\label{p.dichotomy}
	Either every leaf of $\widetilde \cW^{cs}$ is fixed by $\ft$ or the foliation $\cW^{cs}$ is $\RR$-covered and uniform, and $\ft$ acts as a translation on the leaf space of $\widetilde \cW^{cs}$. 
The same dichotomy holds for $\widetilde \cW^{cu}$. 
\end{prop}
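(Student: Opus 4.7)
I treat $\widetilde \cW^{cs}$; the $\widetilde \cW^{cu}$ case is symmetric. The plan is to first use $f$-minimality to show that the set of $\ft$-invariant leaves in $\widetilde \cW^{cs}$ is either empty or the entire foliation. In the empty case, the bounded-distance property (L1) is leveraged to rule out non-Hausdorff branching of the leaf space and non-uniformity of Hausdorff distances, both by showing that the offending pathology would force some iterate of $\ft$ to fix a leaf.

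\textbf{Step 1 (dichotomy from $f$-minimality).} Let $\widetilde F \subset \mt$ be the union of leaves of $\widetilde \cW^{cs}$ that are fixed by $\ft$. By standard foliation arguments (using that $\widetilde \cW^{cs}$ is Reebless, by Burago--Ivanov) $\widetilde F$ is closed in $\mt$, and by (L2) it is $\pi_1(M)$-invariant. Its projection $F \subset M$ is therefore closed, saturated, and $f$-invariant (a leaf with an $\ft$-fixed lift is $f$-fixed). By $f$-minimality of $\cW^{cs}$, $F = \emptyset$ or $F = M$; the latter gives the first alternative of the proposition. Applying the same reasoning to each iterate $\ft^n$ --- which still satisfies (L1) and (L2) --- one may further assume in the remaining case that no iterate of $\ft$ fixes any leaf of $\widetilde \cW^{cs}$.

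\textbf{Step 2 ($\RR$-coveredness).} The leaf space $\cL = \mt/\widetilde \cW^{cs}$ is a simply-connected, possibly non-Hausdorff $1$-manifold. Suppose for contradiction that $\cL$ contains two non-separated leaves $L_1 \neq L_2$. A Barbot--Fenley style argument --- using that non-separated pairs within bounded Hausdorff distance of $(L_1, L_2)$ form a discrete subset of $\cL \times \cL$, and that (L1) keeps $\ft^n(L_i)$ at bounded Hausdorff distance from $L_i$ --- forces the $\ft$-orbit of $\{L_1, L_2\}$ to be finite. Some iterate $\ft^k$ then fixes each $L_i$, contradicting Step 1. Hence $\cL$ is Hausdorff, so $\cL \cong \RR$.

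\textbf{Step 3 (translation and uniformity).} A fixed-point-free homeomorphism of $\RR$ is topologically a translation, so $\ft$ acts as a translation on $\cL$. For uniformity, fix a leaf $L$ and for each $C > 0$ let $U_C \subset \cL$ denote the set of leaves at Hausdorff distance at most $C$ from $L$. By (L1), $\ft U_C \subseteq U_{C+K}$, and a minimality argument analogous to Step 1, applied to the projection of $U_C$ to $M$, shows that $U_C = \cL$ for all sufficiently large $C$, which is exactly uniformity.

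\textbf{Main obstacle.} The most delicate step is Step 2 --- the finiteness of the $\ft$-orbit of a non-separated pair needs quantitative control, coming from the local foliation structure and (L1), on how non-separated pairs are distributed in $\mt$. A secondary subtlety in Step 1 is the passage from ``every leaf is $\ft^n$-fixed for some $n$'' back to ``every leaf is $\ft$-fixed,'' which requires analyzing the induced finite-order action of $\ft$ on leaves within the leaf space.
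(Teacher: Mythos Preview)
Your route differs from the paper's. The paper does not first isolate the set of fixed leaves and apply $f$-minimality to it. Instead it uses tautness: since $\cW^{cs}$ has no compact leaves, each side of any leaf $L$ in $\mt$ contains arbitrarily large balls, and since $\ft$ is at bounded distance from the identity this forces $L$ and $\ft(L)$ to be \emph{nested} (one of the half-spaces of $\ft(L)$ is contained in a half-space of $L$). From a single non-fixed leaf the paper then builds the open saturated set $V=\bigcup_n \ft^n(L\cup U)$, where $U$ is the slab between $L$ and $\ft(L)$; nesting and (L2) make the boundary leaves of $V$ fixed by $\ft$, so the projection of $V$ satisfies $\gamma V=V$ or $\gamma V\cap V=\emptyset$ for every deck transformation, and $f$-minimality forces $V=\mt$. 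This single step simultaneously yields the translation behavior and the uniform bound $d_H(L,\ft(L))\le K$, and reduces $\RR$-coveredness to understanding the slab $U$.

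Your Step~1 is fine, and your Step~2 strategy (periodicity of non-separated pairs under a bounded map) is a legitimate alternative to the paper's for $\RR$-coveredness. But two points deserve attention. First, the ``finite-order'' subtlety you flag is exactly where the nesting property earns its keep: once you know $L$ and $\ft(L)$ are nested, the $\ft$-orbit of $L$ is linearly ordered in the leaf space, so $\ft^k(L)=L$ forces $\ft(L)=L$ immediately, with no separate analysis of finite-order actions on non-Hausdorff trees. You should invoke tautness and nesting here rather than leave this open.

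Second, your uniformity argument in Step~3 does not work as written. The set $U_C$ of leaves within Hausdorff distance $C$ of a fixed leaf $L$ is neither $\pi_1(M)$-invariant (since $\gamma U_C$ is centered at $\gamma L$, not $L$) nor $\ft$-invariant (you only get $\ft U_C\subset U_{C+K}$), so there is no closed, saturated, $f$-invariant set in $M$ to which $f$-minimality applies. The honest argument again goes through the slab: (L1) gives $d_H(L,\ft(L))\le K$, and once you know the region between $L$ and $\ft(L)$ has interval leaf space with leaves uniformly close to its boundary (this is the ``additional work'' the paper alludes to), the translation stacks these slabs to give uniformity.
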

\begin{proof}[Sketch]
Since the $2$-dimensional foliations do not have compact leaves, they are \emph{taut}. In particular both connected components of the complement of a leaf $L$ in $\mt$ contain arbitrarily large balls. This implies that the image of a leaf must be \emph{nested} with itself, i.e.~for a fixed transverse orientation, the positive half-space determined by one leaf contains the positive half-space determined by the other. This way, if a leaf $L$ is not fixed by $f$, one can consider the set $V \in \mt$ defined by

$$ V:=\bigcup_n \ft^n(L \cup U),$$
\noindent where $U$ is the region `between' $L$ and $\tilde f(L)$. It follows that the set $V$ can be shown to be open and $\tilde f$-invariant. 
Using that $\ft$ commutes with deck transformations and that the image of a leaf is nested with itself one can show that the boundary leaves of $V$ are also invariant under $\ft$ and therefore the set $V$ verifies that for every deck transformation $\gamma \in \pi_1(M)$ one has that either $\gamma V = V$ or $\gamma V \cap V = \emptyset$. By $f$-minimality we obtain that $V = \mt$ and this implies the second possibility. Additional work is needed to show that 
$\cW^{cs}$ is $\RR$-covered.
\end{proof}

Given this proposition there are three possibilities:
\begin{enumerate}[label=\arabic*)]
 \item $\ft$ fixes every leaf of both $\widetilde \cW^{cs}$
and $\widetilde \cW^{cu}$, referred to as the \emph{doubly invariant} case;
 \item $\ft$ fixes no leaves of either foliation, henceforth called the
\emph{double translation} case; and
 \item $\ft$ fixes every leaf of one of the foliations, but
no leaf of the other foliation, henceforth called the \emph{mixed} case.
\end{enumerate}
Our goal is to rule out 2) and 3).

\subsection{No mixed behavior} 
%Proposition \ref{p.dichotomy} gives that every leaf of $\widetilde \cW^{cs}$ 
%s fixed by $\ft$ or else $\cW^{cs}$ is 
%\RR$-covered and uniform, and $\ft$ acts as a translation on the leaf space 
%f $\widetilde \cW^{cs}$ and the same for $\widetilde \cW^{cu}$. 
%owever, it is not clear if this behaviour is the same for both foliations. 

We can show (this will be expanded upon later): 

\begin{prop}\label{p.nomix} 
If $M$ is hyperbolic or Seifert and $\ft$ fixes a leaf of 
$\widetilde \cW^{cs}$ then it fixes every leaf of 
\emph{both} $\widetilde \cW^{cs}$ and 
$\widetilde \cW^{cu}$. 
\end{prop}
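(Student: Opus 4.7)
The proof proceeds by contradiction. By Proposition~\ref{p.dichotomy}, if $\ft$ fixes a leaf of $\widetilde\cW^{cs}$ then in fact every leaf of $\widetilde\cW^{cs}$ is fixed, so what we really need to exclude is the \emph{mixed} case: $\ft$ fixes every leaf of $\widetilde\cW^{cs}$ while $\cW^{cu}$ is $\RR$-covered and uniform and $\ft$ acts as a non-trivial translation on the leaf space $\cL^{cu}$ of $\widetilde\cW^{cu}$. I will extract from this configuration a strong algebraic constraint on $\pi_1(M)$ and then use the geometry of $M$, either Seifert or hyperbolic, to derive a contradiction.

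The first step is to convert the translation action into a circle action. Since $\ft$ commutes with every deck transformation (property (L2)) and acts on $\cL^{cu}\cong\RR$ as a non-trivial translation $T$, every element of $\pi_1(M)$ acts on $\cL^{cu}$ by a homeomorphism commuting with $T$. Any homeomorphism of $\RR$ commuting with a non-trivial translation descends to a homeomorphism of the quotient circle $\cL^{cu}/\langle T\rangle$, so we obtain a representation
\[ \rho\colon \pi_1(M) \longrightarrow \mathrm{Homeo}^+(S^1). \]
By $f$-minimality of $\cW^{cu}$ this action has no global fixed point. Simultaneously, on any fixed $cs$ leaf $L\in\widetilde\cW^{cs}$ the restriction $\ft|_L$ contracts the stable subfoliation and translates the center leaves of $L$ (the trace of $\widetilde\cW^{cu}$ on $L$). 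Using property (L1), recurrence of $cs$ leaves under $f$ and the $f$-minimality of $\cW^{cu}$, I would produce elements $\gamma_n\in\pi_1(M)$ such that $\gamma_n\circ\ft^n$ sends a chosen center leaf back near itself, which forces the rotation number of $\rho(\gamma_n)$ to be prescribed modulo $1$ by $n\tau$, where $\tau$ is the translation length of $\ft$ on $\cL^{cu}$.

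The Seifert case should close by using the center of $\pi_1(M)$. After passing to a finite cover we may assume $M$ is an orientable Seifert fibration with regular fiber class $z$ generating the center of $\pi_1(M)$. Then $\rho(z)$ is central in $\rho(\pi_1(M))$, and since the action has no global fixed point, $\rho(z)$ must be a rigid rotation of $S^1$. Using the standard classification of $\RR$-covered taut foliations on Seifert fiber spaces (Brittenham, Eisenbud--Hirsch--Neumann) one may arrange $\cW^{cu}$ to be either horizontal or vertical. The vertical option is ruled out immediately by (L1): fibers are compact and $\ft$-images of fibers must stay bounded, which is incompatible with the non-trivial translation of leaves. The horizontal option expresses the rotation number of $\rho(z)$ in terms of the Euler number of the Seifert fibration, and comparing this with the prescribed rotation numbers of the $\rho(\gamma_n)$ from the previous paragraph yields an arithmetic contradiction.

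The hyperbolic case is the main obstacle, and is where the theory of foliations on hyperbolic 3-manifolds is essential. On a closed hyperbolic 3-manifold any uniform $\RR$-covered foliation admits a transverse, regulating (topological) pseudo-Anosov flow (Thurston, Calegari, Fenley), whose closed orbits project to infinitely many non-conjugate elements of $\pi_1(M)$ with very constrained action on $\cL^{cu}$. I would use these orbits to produce elements $\gamma\in\pi_1(M)$ whose $\rho$-images must simultaneously commute with the translation $T$ and exhibit the hyperbolic (north--south) dynamics coming from the pseudo-Anosov flow; these two requirements are incompatible, since commuting with $T$ forces $\rho(\gamma)$ to descend to a circle homeomorphism, so it cannot display the alternating attractor/repeller pattern of the lift of a pseudo-Anosov element. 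The delicate part is to ensure that the elements we produce really carry the pseudo-Anosov dynamics on $\cL^{cu}$, which is where the machinery from the cited works on $\RR$-covered Anosov and pseudo-Anosov flows (\cite{Fen2002,Fen2003,Calegari,BarbotFeuill}) enters in full force.
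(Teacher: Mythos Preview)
Your approach is genuinely different from the paper's, and the hyperbolic part contains a real error.

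The paper does not argue via the induced circle action on $\cL^{cu}/\langle T\rangle$ at all. Instead it studies the dynamics of $f$ on \emph{periodic center leaves}. The key tool is Proposition~\ref{prop.Key}: if $\ft$ fixes every leaf of $\widetilde\cW^{cs}$ but no center leaf, then every periodic center leaf is \emph{coarsely contracting} (and at least one exists). In the Seifert case one uses the center $\mathfrak c$ of $\pi_1(M)$ to produce, after composing an iterate with a power of $\mathfrak c$, a second good lift that fixes every leaf of $\widetilde\cW^{cu}$ (Proposition~\ref{p.changelift}); applying Proposition~\ref{prop.Key} to both lifts forces the same periodic center leaf to be simultaneously coarsely contracting and coarsely expanding, a contradiction. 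In the hyperbolic, dynamically coherent case the paper invokes Addendum~\ref{add-key}: the coarse-contraction configuration of Proposition~\ref{prop.Key} is itself impossible when $f$ is dynamically coherent, so $\ft$ must fix some center leaf, hence the $cu$ leaf through it, hence (by the dichotomy) every $cu$ leaf.

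The gap in your hyperbolic argument is the claimed incompatibility. A homeomorphism of $\RR$ commuting with $T$ descends to a circle homeomorphism, true---but circle homeomorphisms are perfectly capable of exhibiting attractor/repeller (``north--south'') dynamics, with any even number of fixed points. There is no contradiction between ``descends to the circle'' and ``pseudo-Anosov-type dynamics''. Worse, for a deck transformation $\gamma$ associated to a periodic orbit of the regulating flow $\Phi$, the action of $\gamma$ on the leaf space $\cL^{cu}\cong\RR$ is itself a translation (the orbit is regulating, so it crosses every leaf monotonically), hence $\rho(\gamma)$ is a rotation of the quotient circle; the hyperbolic behavior you have in mind lives on the ideal boundary $\partial_\infty L$ of a leaf, which is a different circle altogether. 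Your Seifert argument is closer in spirit to the paper's (both exploit the center of $\pi_1(M)$), but the promised ``arithmetic contradiction'' from comparing rotation numbers is never made precise, and since rotation number is not a homomorphism on $\mathrm{Homeo}^+(S^1)$ it is unclear how you would close it.
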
 

%The use that $M$ is hyperbolic or Seifert is because in the proof of 
%a key proposition (cf.~Proposition \ref{prop.Key}) we need to use 
%Candel's uniformization theorem \cite{Candel} which easily applies when $M$ is hyperbolic or Seifert. 
%

%For the moment the argument to show this requires very strongly the use of being hyperbolic or Seifert and in each case the strategy is similar but the way to implement it very different. In a nutshell, when, say, the center stable foliations is fixed and no center leaf is fixed we get enough structure to show that there is a well understood behaviour of periodic center leaves of $f$ (they are \emph{coarsely contracting}). This is the content of Proposition \ref{prop.Key}. 

%When $M$ is Seifert we can play with the choice of lifts to be able to apply this both for $\cW^{cs}$ and $\cW^{cu}$ and reach a contradiction. When $M$ is hyperbolic, we profit from the \emph{transverse pseudo-Anosov flows} \cite{Thurston,CalegariPA,Fen2002} to force some center behaviour on some periodic center leaves thanks to the translating behaviour of the other foliation (this is the content of Proposition \ref{prop-translation-expanding} and its Addendum \ref{add-trans-exp}). This center behaviour is in contradiction with the analysis made in Proposition \ref{prop.Key} and its Addendum \ref{add-key}. 

\subsection{Double translation} 
In order to be leaf conjugate to the time one map of a (topological) Anosov flow one needs to exclude the possibility that either
of the foliations $\widetilde \cW^{cs}$ or 
$\widetilde \cW^{cu}$ are translated by $\ft$. 

The proof of this is very different in the Seifert and the hyperbolic case (and we do not know how to make it work for more general manifolds).
In the hyperbolic case it depends crucially on dynamical coherence. 
%Just to say some words (that will be expanded later) let us mention that 
In the Seifert case we can (after considering a finite iterate) choose a different lift such that all the leaves of one of foliations in $\mt$ are fixed by that new lift $\ft$. 
This is enough to exclude this possibility (for this specific lift). 

In the hyperbolic manifold case the proof is much more involved and 
uses the existence of a transverse pseudo-Anosov flow to the
$\RR$-covered foliation (either $\cW^{cs}$ or 
$\cW^{cu}$). This forces a particular
 dynamics on periodic center leaves. Using both translations it is possible to find a contradiction (see Proposition \ref{prop-translation-expanding} and Proposition \ref{prop-contracting-nonDC}). %Notice that the contradiction is with $f$ being dynamically coherent (or at least preserving a center foliation). 

\subsection{Double invariance}\label{ss.doublyinvariant}

Once we know that both foliations are fixed by $\ft$, 
the next step is to show that connected components of the intersections between leaves of $\widetilde \cW^{cs}$ and $\widetilde \cW^{cu}$ (i.e.~center leaves $-$ $\widetilde \cW^c$) are also fixed by 
$\ft$. In turn, after some more or less standard considerations (see also \cite{BW}), this yields the desired statement 
that $f$ is leaf conjugate to the time one map of a topological Anosov flow in Theorems \ref{thmA} and \ref{thmB}.

The key point of this stage is to show that the set of fixed leaves
of $\widetilde \cW^c$ is open and closed in $\mt$.
From this, if the set of fixed centers was to be empty, we can apply Proposition \ref{prop.Key} below to obtain a contradiction. Showing that the set of 
fixed center leaves is open is not so complicated, but closedness is a bit more delicate.

\subsection{Important property}
As will be explained later, the proof that the mixed
case or the double translation case cannot happen under 
certain situations, is achieved as follows:
We analyze the structure
forced by the hypothesis of mixed or double translation
situation and we prove that in a center leaf that is
periodic under $f$, we have that both rays have to
be (say) contracting, and at the same time one of the
rays has to be expanding. So the analysis of the action
of $f$ on rays of periodic center leaves is crucial to
our strategy.

%%%%%%%%%%%%%%%%%%%%%%%%%%%%%%%%%%%%%%%%
\section{A key general proposition} 

In this section we analyze the case where one assumes
that one of the foliations, (say) $\widetilde \cW^{cs}$ is 
leafwise fixed by $\ft$. 
%So, along this section we will assume that every leaf of $\tilde \cW^{cs}$ is fixed by $\tilde f$, of course, 
A symmetric analysis holds for $\widetilde \cW^{cu}$.
% are leafwise fixed by $\tilde f$. 

\subsection{Consequences of fixed center-stable leaves}

The first relatively simple but powerful consequence of having 
$cs$-leaves fixed was already noted in \cite{BW} (see also \cite{HP-Sol}): 

\begin{lema}\label{l.nofix}
The lift $\ft$ has no fixed (or periodic) points. 
\end{lema}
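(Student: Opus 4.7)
My plan is to argue by contradiction, combining expansion along $E^u$ with the hypothesis that every center-stable leaf is fixed by $\ft$. Suppose $\ft^k(x) = x$ for some $x \in \mt$ and some $k \geq 1$. The iterate $\ft^k$ still satisfies (L1) (with constant $kK$) and (L2), and it still fixes every leaf of $\widetilde \cW^{cs}$, so after replacing $\ft$ by a sufficiently high iterate I may assume $\ft(x) = x$ and $\|D\ft|_{E^u(z)}\| > 1$ pointwise; it therefore suffices to rule out genuine fixed points.

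Because $\ft$ preserves $\widetilde \cW^u$ and fixes $x$, the strong unstable leaf $\gamma := \widetilde \cW^u(x)$ is $\ft$-invariant, and $x$ is a hyperbolic repelling fixed point of the $1$-dimensional map $\ft|_\gamma$. I would pick a small foliation chart $U$ around $x$ compatible with the transverse pair $\widetilde \cW^u$, $\widetilde \cW^{cs}$, so that $\gamma \cap U$ is a single arc meeting each $\widetilde \cW^{cs}$-plaque in $U$ in exactly one point. Then I would choose $y \in \gamma \cap U$ close enough to $x$ so that the backward iterates $\ft^{-n}(y)$ converge to $x$ in the intrinsic metric on $\gamma$, hence lie in $U$ for all large $n$. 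Local transversality gives $y \notin \widetilde \cW^{cs}(x)$, so $L_y := \widetilde \cW^{cs}(y)$ is distinct from $\widetilde \cW^{cs}(x)$.

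The key use of the hypothesis is now that every cs-leaf is $\ft$-invariant, hence $\ft^{-1}$-invariant, so the entire backward orbit of $y$ stays on $L_y$. Combining $\ft^{-n}(y) \in \gamma$, $\ft^{-n}(y) \in L_y$, and $\ft^{-n}(y) \in U$ for large $n$, I obtain $\ft^{-n}(y) \in \gamma \cap L_y \cap U = \{y\}$, so $\ft^{-n}(y) = y$ eventually. This makes $y$ itself a fixed point of $\ft$, contradicting the fact that $x$ is an isolated fixed point of $\ft|_\gamma$ in a small neighborhood (as a hyperbolic repeller). The point that most needs care is that the leafwise-fixed assumption on $\widetilde \cW^{cs}$, rather than invariance of $\widetilde \cW^{cs}(x)$ alone, is what traps $\ft^{-n}(y)$ on the single plane $L_y$; without that the backward orbit could wander to different cs-leaves and the local product argument would fail.
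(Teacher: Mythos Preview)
Your argument is the paper's argument, but routed through a local detour that introduces an imprecision. The paper proceeds directly: by Novikov's theorem (cited as \cite{Nov}), an unstable leaf in $\mt$ meets each leaf of $\widetilde \cW^{cs}$ in at most one point; since $\ft$ fixes both $\gamma = \widetilde \cW^u(x)$ (because $x$ is fixed) and every center-stable leaf, it must fix each singleton $\gamma \cap L$, hence fix every point of $\gamma$, contradicting expansion along $E^u$.

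Your chart-based version implicitly relies on this same global fact but does not invoke it. The assertions ``local transversality gives $y \notin \widetilde \cW^{cs}(x)$'' and ``$\gamma \cap L_y \cap U = \{y\}$'' conflate plaques with leaves: in a foliation chart one only knows that $x$ and $y$ lie in distinct \emph{plaques}, and a global leaf $L_y$ could in principle return to $U$ in several plaques, one of them the plaque through $x$. What rules this out is precisely the Novikov statement that a transversal to a Reebless foliation in $\mt$ meets each leaf at most once. Once that is granted, $\gamma \cap L_y = \{y\}$ holds globally and $\ft(y) \in \gamma \cap L_y$ yields $\ft(y) = y$ immediately, so the backward-iteration and local-chart machinery become superfluous. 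The idea is right; the cleaner execution is to cite the global transversality fact up front, as the paper does.
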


This is fairly simple. 
Suppose that $x$ in  a leaf $L$ of 
$\widetilde \cW^{cs}$ is fixed by $\ft$ and consider the
unstable leaf $u(x)$ of $x$.
The intersection of an unstable (one dimensional) leaf in $\mt$
with a center stable (two dimensional) leaf is at most a single
point \cite{Nov}.
Since both $u(x)$ and any $L'$ center stable leaf are fixed
by $\ft$, then every point in $u(x)$ is fixed by $\ft$.
This contradicts the fact that iteration by $\ft$ pushes points
in $u(x)$ `away' from $x$.
It follows that no fixed or periodic points of $\ft$ can exist. 

Using this, and the fact that $\ft$ contracts the one dimensional
stable leaves, one deduces that the action of 
$\ft$ on the space of stable leaves 
\emph{in a fixed leaf of $\widetilde \cW^{cs}$} is free (i.e.~it has no fixed points). Similarly, since the 
stable foliation (in $M$) is by lines (it contains no circles) one also knows that the action of every deck transformation in the space of stable leaves is also free. Putting this together with the fact that $\ft$ commutes with deck transformation and using the theory of axes for actions on non-Hausdorff, simply connected one manifolds
 (see e.g.~\cite{Bar98, Fen98, Fen2003});
 one deduces the following very important property: 

\begin{prop}\label{p.cylorplane} 
Every leaf of $\cW^{cs}$ is a cylinder, a plane, or a M\"{o}bius band.
\end{prop}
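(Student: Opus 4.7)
Let $F$ be a leaf of $\cW^{cs}$ and fix a lift $L$ to $\widetilde \cW^{cs}$. Then $F$ is the quotient of $L$ by its stabilizer $\Gamma_L \subset \pi_1(M)$ under the deck action, so $\pi_1(F)\cong\Gamma_L$. Since $\cW^{cs}$ has no compact leaves, $F$ is a non-compact surface without boundary, hence $\pi_1(F)$ is a free group. The plan is to show, additionally, that $\Gamma_L$ is abelian; being simultaneously a free group and abelian will force it to be trivial or infinite cyclic, which yields exactly the three allowed topological types.

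To prove abelianness, I would work with the leaf space $T$ of the restriction of $\widetilde \cW^s$ to $L$, a simply connected (possibly non-Hausdorff) $1$-manifold. By the discussion preceding the proposition, every non-trivial $\gamma\in\Gamma_L$ acts freely on $T$, and so does $\ft|_L$; moreover, property (L2) ensures that $\ft|_L$ commutes with every such $\gamma$. Invoking the axis theory for free actions on non-Hausdorff simply connected $1$-manifolds (cf.~\cite{Bar98,Fen98,Fen2003}), $\ft|_L$ admits a well-defined axis $A\subset T$ that is homeomorphic to $\RR$. Since $\Gamma_L$ centralizes $\ft|_L$, every $\gamma\in\Gamma_L$ preserves $A$ setwise, and the induced action of $\Gamma_L$ on $A$ remains free: a fixed point on $A$ would correspond to a $\gamma$-invariant stable leaf in $L$, contradicting the absence of closed stable leaves in $M$.

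H\"older's theorem then applies: a group acting freely on $\RR$ by homeomorphisms is abelian after passing to an orientation-preserving subgroup of index at most $2$. Since $\Gamma_L$ is torsion-free (being a free group) and a free group that is also abelian has rank at most one, this forces $\Gamma_L$ itself to be either trivial or isomorphic to $\ZZ$, completing the proof.

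The step I expect to be most delicate is not the algebraic conclusion but the construction and control of the axis $A$: because $T$ can be non-Hausdorff, defining an axis that is honestly homeomorphic to $\RR$ and verifying that elements centralizing $\ft|_L$ preserve it in the required sense requires the full strength of the axis theory for actions on non-Hausdorff $1$-manifolds developed in the cited works. Once these are in hand, the remaining arguments are essentially formal.
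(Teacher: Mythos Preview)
Your proposal is correct and follows essentially the same route as the paper: both use that $\ft$ and every nontrivial $\gamma\in\Gamma_L$ act freely on the stable leaf space $T$ in $L$, that $\ft$ commutes with $\Gamma_L$, and then invoke the axis theory for non-Hausdorff simply connected $1$-manifolds to conclude. The paper simply asserts the conclusion from the axis theory, while you spell out the endgame (axis preserved by the centralizer, H\"older, free plus abelian implies rank at most one); one cosmetic simplification is that a free action on $\RR$ is automatically orientation-preserving, so the index-$2$ passage is unnecessary.
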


Note also that by a result of Rosenberg \cite{Ros} not every leaf (in $M$) can be a plane. Hence, by $f$-minimality, we get that cylinder and M\"{o}bius leaves are dense in $M$.

\subsection{Gromov hyperbolic leaves}\label{ss.Gromov}

For foliations by surfaces on 3-manifolds one has the following important result: 

\begin{teo}[\cite{Sul,Gro}]\label{t.Gromov}
Let $\cF$ be a codimension one foliation with no compact
leaves on a closed 3-manifold $M$. Then, either there exists a transverse invariant measure, or the leaves of $\cF$ are Gromov hyperbolic.
\end{teo}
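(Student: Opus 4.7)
The plan is to reduce the theorem to Candel's uniformization theorem for foliated spaces, which provides precisely this dichotomy between the existence of a holonomy-invariant transverse measure and uniform hyperbolicity of leafwise geometry. Once Candel is invoked, the remaining task is to compare the resulting hyperbolic leafwise metric with the restriction of a Riemannian metric on $M$ and appeal to quasi-isometry invariance of Gromov hyperbolicity.

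First I would equip each leaf of $\cF$ with the Riemannian metric induced from $M$, so that each leaf becomes a complete Riemannian surface. Applying the classical uniformization theorem leafwise, each leaf is conformally a sphere, a plane, or the hyperbolic plane. Sphere leaves are compact and thus excluded by hypothesis, so every leaf is conformally either parabolic or hyperbolic.

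Next I would appeal to Candel's theorem: on a compact foliated $3$-manifold with no sphere leaves, either the union of the conformally parabolic leaves supports a holonomy-invariant transverse measure, or every leaf is conformally hyperbolic and admits a leafwise metric $g_{0}$ of constant curvature $-1$ that varies continuously in the transverse direction. Under the hypothesis that no transverse invariant measure exists, we fall into the second alternative. By compactness of $M$ and continuity of both $g_{0}$ and the ambient metric, the two metrics are uniformly bilipschitz on every leaf. Since $\mathbb{H}^{2}$ is Gromov hyperbolic, and Gromov hyperbolicity is a quasi-isometry invariant among geodesic metric spaces, each leaf equipped with the metric inherited from $M$ is Gromov hyperbolic.

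The main obstacle is Candel's theorem itself, which I would treat as a black box. Its proof passes through Garnett's theory of harmonic measures on foliations and a careful analysis of the leafwise Laplacian: the absence of a holonomy-invariant transverse measure is used to rule out Euclidean leafwise geometry and to produce the continuous hyperbolic leafwise metric via a suitable averaging and curvature-flow argument. By contrast, the other steps---leafwise uniformization, ruling out sphere leaves from the no-compact-leaves assumption, and the quasi-isometric comparison of $g_{0}$ with the ambient metric---are essentially formal once Candel is in hand.
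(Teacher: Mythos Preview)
Your argument via Candel's uniformization theorem is correct, and the paper in fact explicitly mentions this as an alternative derivation. The route the paper attributes to the theorem, however, is different: Sullivan showed that in the absence of a transverse invariant measure the leaves satisfy a \emph{linear isoperimetric inequality}, and Gromov proved that for geodesic metric spaces a linear isoperimetric inequality implies Gromov hyperbolicity. Your approach has the advantage of yielding an actual continuous leafwise metric of curvature $-1$ (which is sometimes useful downstream, e.g.\ when one wants to speak of $\partial_\infty L$ concretely or to treat deck transformations as genuine hyperbolic isometries of leaves, as the paper does later). The Sullivan--Gromov route is more elementary in that it avoids the analytic machinery behind Candel's theorem and stays at the level of coarse geometry; it also explains the attribution in the theorem's label.

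One small wording issue: Candel's dichotomy is not quite ``either the parabolic leaves support an invariant measure, or \ldots''. The precise statement is that a leafwise hyperbolic metric exists if and only if every invariant transverse measure has negative Euler characteristic. Under your hypothesis that no invariant transverse measure exists at all, this condition is satisfied vacuously, so your conclusion is fine; just be careful not to overstate what Candel provides in the other direction.
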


%\marg{{\color{red}S: Rewrote here, instead of Candel's thm is
%one for Gromov hyperbolic leaves. Please check.}  \blue{T: I'm fine with that, but it is a lot more work for the reader to verify that the added work of Sullivan and Gromov actually say what we are claiming then just referring to Candel's theorem...}}

Sullivan \cite{Sul} proved that leaves satisfy a linear isoperimetric
inequality. Later, Gromov \cite[section 6.8]{Gro} proved that it implies Gromov hyperbolicity of the leaves. %\marg{\blue{T: Should we just add ``this result also follows from 
%Candel's uniformization theorem''?}{\color{red} \ \ S: 
%Done}} 
% (denoted by hyperbolic in Gromov's monograph).
This result also follows from Candel's uniformization 
theorem \cite{Candel}.

In our setting (either $M$ is hyperbolic or Seifert or the foliations are 
$f$-minimal), using partial hyperbolicity, we can show that the foliation cannot admit a transverse invariant measure (in the case of all $\widetilde \cW^{cs}$ leaves fixed by $\ft$). 
%\blue{We will later use Theorem \ref{t.Candel} applied sometimes to the center stable foliations and sometimes to the center unstable foliation (depending on which one is fixed by $\ft$), so
%the theorem above implies that there is a metric in $M$ such that
%(say) the center stable leaves are hyperbolic surfaces.}
%%Notice that the metric given by Candel's Theorem will depend on which foliation we look at. However, the results we obtain using these metrics are coarse in nature and will not depend on the choice of metric. In fact, we could just pick one metric which will be, by Candel's Theorem, Gromov-hyperbolic for both foliations and carry the proofs with that fixed metric.}  %Therefore, we can always apply this result and assume throughout that the metric on $M$ restricts to hyperbolic metrics on each leaf of the center stable and center unstable foliations. 
%\marg{\blue{R: You are right. We just need one of them, indeed the one that is fixed. I tried a fix. Please check if it is better or worse :).} {\color{red} T: Modified the sentence a bit more} R: ok, S: I definitely think
%we should not get into details here. Plus Candel's thm is to give
%hyperbolic leaves (on the dot). For Gromov hyperbolicity it is
%an earlier result of Plante, Sullivan, Gromov..} 

\subsection{Coarse contraction and a key proposition} 
We start by defining a property 
that will be helpful in our study. We say that a center leaf $c \in \cW^c$ is \emph{coarsely contracting} if:

\begin{itemize}
\item it is homeomorphic to $\RR$ (i.e.~not a circle),
\item it is periodic by $f$ (i.e.~there exists $k$ such that $f^k(c)=c$),
\item it has a bounded interval $I$ containing all the fixed points of $f^k$,
\item every point $y \in c \smallsetminus I$ converges to $I$ under forward iteration of $f^k$. 
\end{itemize}

We say that a center leaf is \emph{coarsely expanding} if $f^{-1}$ is coarsely contracting. 
Here is the first result concerning dynamics on periodic center leaves.
In this result we do not assume dynamical coherence.

\begin{prop}\label{prop.Key}
Let $f\colon M \to M$ be a partially hyperbolic diffeomorphism (not
necessarily dynamically coherent) such that 
$\ft$ fixes every leaf of $\widetilde \cW^{cs}$ and does not fix any leaf of 
$\widetilde \cW^c$. Assume moreover that the foliations $\cW^{cs}$ and $\cW^{cu}$ are $f$-minimal. Then, every periodic center leaf $c$ of $f$ is coarsely contracting. Moreover, there is at least one coarsely contracting periodic center leaf. 
\end{prop}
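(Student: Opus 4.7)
The plan is to handle the two assertions separately; I work in the dynamically coherent case, the non-coherent version going through via the branching foliations of \cite{BI} in the usual way. Fix a periodic center $c$ with $f^k(c)=c$ and, after replacing $f$ by $f^k$, assume $k=1$. Choose a lift $\tilde c\subset L$, where $L$ is a leaf of $\widetilde\cW^{cs}$; by hypothesis $\ft(L)=L$ and $\ft(\tilde c)=\gamma\tilde c\neq\tilde c$ for some $\gamma\in\pi_1(M)\cap\mathrm{Stab}(L)$. This is the basic setup.

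The first sub-step is to rule out $c\cong S^1$. A Brouwer-theoretic analysis on the $cs$-leaf $L$ is used: the iterates $\gamma^n\tilde c$ form a family of disjoint or equal lifts of $c$ in $L$, and the induced dynamics of $\ft|_L$ on a quotient of $L$ by a finitely generated subgroup of $\mathrm{Stab}(L)$ containing $\pi_1(c)=\langle\alpha\rangle$ would produce infinitely many parallel closed transversals to the stable foliation that are systematically shifted by the induced map; the contraction of stable leaves together with Lemma~\ref{l.nofix} then forces a periodic point of $\ft$, a contradiction. The degenerate case where $\tilde c$ is a compact circle is excluded by tautness of $\cW^{cs}$ and $f$-minimality.

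With $c\cong\RR$ established, I bound the set $\mathrm{Fix}(f|_c)$. Since $\mathrm{Stab}(\tilde c)=\{e\}$, for every lift $\tilde p\in\tilde c$ of a point $p\in\mathrm{Fix}(f|_c)$ we have $\ft(\tilde p)=\gamma\tilde p$, hence $d_{\mt}(\tilde p,\gamma\tilde p)\leq K$ by property (L1). Because $\gamma$ does not preserve $\tilde c$ (else $\ft(\tilde c)=\tilde c$), the intersection of $\tilde c$ with the $K$-displacement set of $\gamma$ is bounded in $\mt$; this is straightforward once one excludes the virtually solvable regime (already handled in \cite{HP-Nil,HP-Sol}). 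This yields a bounded interval $I\subset c$ containing all fixed points. To upgrade this to convergence of forward orbits I invoke Proposition~\ref{p.dichotomy}: the hypotheses force $\widetilde\cW^{cu}$ to be $\RR$-covered, uniform, and translated by $\ft$, so the iterates $\gamma^n\tilde c$ march monotonically through the $\RR$-ordered $cu$-leaf space inside $L$. The induced homeomorphism $\gamma^{-1}\ft|_{\tilde c}$ is orientation preserving with bounded fixed-point set, hence monotone on each unbounded component of its complement; escape to infinity of a forward orbit would force $\gamma^n\tilde c$ to cross arbitrarily far out along the escaping ray, contradicting the uniform Hausdorff bound between consecutive iterates of a uniform $\RR$-covered foliation.

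For the existence of at least one coarsely contracting periodic center, I combine the $f$-minimality of $\cW^{cs}$ with the translation action on the $\widetilde\cW^{cu}$-leaf space: a standard minimal-set argument in the translation setting produces a fixed point of some iterate of $f$ inside a minimal $cs$-leaf, whose center leaf is then periodic, to which Part~1 applies. The main obstacle in the whole argument is the final step of Part~1, passing from boundedness of the fixed-point interval to convergence on both ends; this is where the delicate combination of the uniformity of $\widetilde\cW^{cu}$, the translation action on its leaf space, and the geometry of $\tilde c$ inside the (Gromov-hyperbolic) leaf $L$ is used most heavily.
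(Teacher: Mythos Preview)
Your argument has a genuine gap at the step where you invoke Proposition~\ref{p.dichotomy} to conclude that $\widetilde\cW^{cu}$ is $\RR$-covered, uniform, and translated by $\ft$. The hypotheses of Proposition~\ref{prop.Key} say only that $\ft$ fixes every $cs$-leaf and fixes no \emph{center} leaf; they say nothing about the $cu$-leaves. The dichotomy for $\cW^{cu}$ gives two alternatives, and the one you discard --- that $\ft$ fixes every $cu$-leaf --- is perfectly compatible with the hypotheses: a $cs$-leaf and a $cu$-leaf may intersect in infinitely many center components, which $\ft$ can permute nontrivially. In fact the paper applies Proposition~\ref{prop.Key} precisely in this doubly invariant situation (see \S\ref{ss.doublyinvariant} and \S\ref{ss.seif}), so your proof must work there, and it does not: both your convergence argument for the rays of $c$ and your existence argument for a periodic center rest on the translation of $\widetilde\cW^{cu}$.

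The paper's route is quite different and never looks at $\cW^{cu}$. It works entirely inside a single $cs$-leaf $L$ stabilised by $\gamma\in\pi_1(M)$, using that (by Theorem~\ref{t.Gromov}/Candel) $L$ is Gromov hyperbolic so that $\gamma$ acts as a hyperbolic isometry. A graph transform argument shows that no center leaf in $L$ can cross from a stable leaf $s$ to $\gamma^k s$ (otherwise one would produce a center fixed by $\ft$), hence the translates $\gamma^k s$ chop $L$ into bands of bounded width that trap every center. One then finds $\ell,k$ with $h=\gamma^k\circ\ft^\ell$ fixing $s$ and a point $x\in s$; hyperbolicity of $\gamma$ forces $h$ to move points of $s$ on both sides of $x$ arbitrarily far, and this trapping behavior is what yields coarse contraction on every periodic center. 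Your displacement-set bound on $\mathrm{Fix}(f|_c)$ is in the right spirit, but making it rigorous also needs exactly this Gromov-hyperbolic geometry of $L$, which you do not set up.
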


To prove this Proposition we use some properties of deck 
transformations of $\widetilde M$ fixing a leaf of 
$\widetilde \cW^{cs}$. By Theorem \ref{t.Gromov}, these
leaves are Gromov hyperbolic, hence isometries restricted
to the leaves are hyperbolic \cite{Gro}. This hyperbolic behavior will be crucial in our analysis.

\begin{add}\label{add-key}
Suppose the hypothesis of Proposition \ref{prop.Key} are
satisfied. Suppose moreover that $f$ is dynamically
coherent. Then the dynamical behavior 
described in Proposition \ref{prop.Key} is impossible.
In other words, if $\ft$ fixes every leaf of $\widetilde \cW^{cs}$ then 
it has to fix a leaf of $\widetilde \cW^c$.
\end{add}

So far, we have to assume dynamical coherence to get the Addendum in this generality. We can easily prove this result without the assumption for Seifert manifolds, and, with a lot more work, for hyperbolic manifolds. It is not yet clear to us whether the assumption is really needed in the general case.
%Notice the emphasis on dynamical coherence in this result. It is precisely this addendum that we are not able to prove \blue{in general} %for hyperbolic 3-manifolds
% without assuming dynamical coherence.
% The case of Seifert manifolds is much easier and we eventually
% show that we do not need to assume dynamical coherence.  
%\marg{\blue{R: Now I believe we can get around in the Seifert and hyperbolic case to show no mixed behavior..... Also, I think this is not the place where DC is more relevant (I would remove the word 'precisely' as it suggest this is the key place). Maybe say that in hyperbolic case it is possible but more involved.} {\color{red}T: Rephrased the paragraph. Is it ok now?} R: ok,
%S: Question: are the statements correct now? This includes the
%margin remark by Rafael.}

%\begin{add}\label{add-key} 
%Moreover, for each periodic center leaf $c$ and lift $h = \gamma \circ \tilde f^k$ which fixes a lift of $c$, there are countably many different center leaves $\tilde c_n$ of $\tilde \cW^{c}$ which are fixed by $h$. 
%\end{add}

%\begin{add}\label{add-key}
%Moreover, there are center leaves which have the same points at infinity.   
%\marg{R: PROBLEM?? Why cant these points be separated in the $\cW^{cu}$-leaf!???? Other information following from this?}
%\end{add}

The proof of the proposition above 
is quite involved but we can sum up the main idea as follows:

\begin{proof}[Sketch of the proof]
Up to a finite cover and iterates we may assume that there
are no M\"{o}bius band leaves in $\cW^{cs}$.
%The mains steps of the proof are as follows: First, 
Since $\tilde f$ has no periodic points, it follows that every periodic point of $f$ has to be in a cylinder leaf. 

Take a cylinder leaf and its lift $L$ 
to $\mt$ which is stabilized by a deck transformation $\gamma$. 
Since the action of $\ft$ is free on the stable foliation in $L$,
there is an axis for the action on the stable 
leaf space in $L$. The first thing to notice is that a graph transform argument shows that a center leaf in $L$ cannot intersect a leaf $s$ of $\cW^s$ and a translate $\gamma^k s$ for some $k \neq 0$ as that would produce 
a fixed center leaf for $\ft$ contradicting the hypothesis. 

Since every leaf of $\widetilde \cW^{cs}$ is fixed by $\ft$ 
one can show that $\ft$ is a bounded distance from the identity \emph{in the induced metric on $L$}. This and the previous remark allows to obtain a structure on the leaf $L$ where, essentially, the leaf $L$ is covered by bands of bounded width between a stable leaf $s$ and its translate by $\gamma$. 
Notice that since $L/\gamma$ is an annulus with a hyperbolic metric,
then width of $L/\gamma$ itself goes to infinity as one
escapes into the ends of $L/\gamma$.
Moreover, every center leaf gets trapped in such a `band' (i.e.~the translates $\gamma^k s$ bound bands which fill up $L$). 
%emarg{R: Not sure we get exactly this, it is a mental picture. Either check, or write more precisely what we get.} 
There is an iterate $\ell>0$ and $k \in \ZZ\setminus\{0\}$ such that $h = \gamma^k \circ f^\ell$ fixes each band (in particular, fixes $s$). Notice that $h$ is still a partially hyperbolic diffeomorphism and has a fixed point $x \in s$. 

From Candel's theorem, we can assume that $\gamma$ acts on $L$ as a hyperbolic isometry. Thus, we can show that there are points in $s$ which are mapped by $h$ arbitrarily far (i.e.~for every $R>0$ there are points $z \in s$ in both sides of $s \setminus \{x\}$ such that $d_L(h(z),z) > R$). This in turn yields that all points in between $s$ and $\gamma s$ are mapped in a trapping way and provides the coarse contraction on centers. 

When $f$ is dynamically coherent, this behavior is impossible, and this provides the addendum (the behavior is very similar to that of the examples in \cite{HHU-noncoherent} and a similar argument shows that this cannot happen).
\end{proof} 

%%%%%%%%%%%%%%%%%%%%%%%%%%%%%%%%%%%%%%%%%
\section{Seifert Manifolds}\label{s.Seifert}
In this section, let $f\colon M \to M$ be a dynamically coherent partially hyperbolic diffeomorphism with a lift $\ft$ as described before.
%tilde f \colon \tilde M \to \tilde M$ at bounded distance from the identity.

We denote by $\mathfrak{c}$ a generator of the center of $\pi_1(M)$ which corresponds to the fibers of the Seifert fibration.  

\subsection{Horizontality} 
It was shown in \cite{HaPS} that in the setting of this section one has that both $\cW^{cs}$ and $\cW^{cu}$ are \emph{horizontal} (i.e.~leaves are 
uniformly transverse to the Seifert fibers after isotopy). This is relevant to show that both foliations must be minimal (and therefore one can apply 
Theorem \ref{t.Gromov}).

%This implies that both foliations are uniform and that Candel's theorem applies (cf.~subsection \ref{ss.Candel}). 

\subsection{Changing the lifts} 

Since the fundamental group of a Seifert manifold have a non-trivial center, a trick that we can use to simplify a lot our analysis is to chose our lift $\ft$ well:
\begin{prop}\label{p.changelift}
Let $\ft\colon \mt \to \mt$ be a lift of $f$ at bounded distance from the identity and commuting with deck transformations. Then, there exists $\ell >0, k \in \ZZ$ 
such that $\mathfrak{c}^k \circ \ft^\ell$ is a lift of $f^\ell$ which is at bounded distance from the identity, commutes with deck transformations, and fixes a leaf of $\widetilde \cW^{cs}$. 
\end{prop}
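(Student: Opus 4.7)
The plan is to apply Proposition~\ref{p.dichotomy} to $\widetilde{\cW}^{cs}$, and in the translation case, use the horizontality of $\cW^{cs}$ together with rigidity of the induced circle action to force the relevant ratio to be rational. If $\ft$ already fixes every leaf of $\widetilde{\cW}^{cs}$, we are done with $\ell=1$ and $k=0$. Otherwise, $\widetilde{\cW}^{cs}$ is $\RR$-covered and $\ft$ acts on the leaf space $\widetilde{\cL} := \mt/\widetilde{\cW}^{cs} \cong \RR$ as a translation by some $\tau_{\ft} \neq 0$. Since $\mathfrak{c}$ commutes with $\ft$, and any orientation-preserving homeomorphism of $\RR$ commuting with a nontrivial translation is itself a translation, $\mathfrak{c}$ also acts on $\widetilde{\cL}$ as a translation by some amount $\tau_{\mathfrak{c}}$.

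The first observation is that $\tau_{\mathfrak{c}} \neq 0$: horizontality of $\cW^{cs}$, proved in \cite{HaPS}, implies that lifted Seifert fibers --- which are orbits of $\mathfrak{c}$ --- are uniformly transverse to leaves of $\widetilde{\cW}^{cs}$, so no $\mathfrak{c}$-orbit can be contained in a single leaf. Consequently $\SS^1 := \widetilde{\cL}/\langle\mathfrak{c}\rangle$ is a topological circle on which $\ft$ descends to a homeomorphism of rotation number $\rho := \tau_{\ft}/\tau_{\mathfrak{c}}$, and it suffices to show $\rho \in \QQ$. Indeed, writing $\rho = -k/\ell$ in lowest terms with $\ell>0$ and $k \in \ZZ$, the lift $\mathfrak{c}^k \circ \ft^\ell$ has translation amount $k\tau_{\mathfrak{c}} + \ell\tau_{\ft} = 0$ on $\widetilde{\cL}$ and hence fixes every leaf of $\widetilde{\cW}^{cs}$. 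This lift commutes with deck transformations (since $\mathfrak{c}$ is central) and is at bounded distance from the identity: the displacement function $x \mapsto d(\mathfrak{c}(x), x)$ is $\pi_1(M)$-invariant on $\mt$ (again by centrality), so it descends to a continuous, hence bounded, function on the compact manifold $M$, and the same holds for $\mathfrak{c}^k$ and for the composition with $\ft^\ell$.

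The main obstacle is establishing that $\rho \in \QQ$. Since $\ft$ commutes with every deck transformation, its action on $\SS^1$ commutes with the induced action of $\pi_1^{\mathrm{orb}}(\Sigma) := \pi_1(M)/\langle\mathfrak{c}\rangle$ on $\SS^1$, where $\Sigma$ is the base orbifold of the Seifert fibration. When $\Sigma$ has negative Euler characteristic --- the principal case, since non-negative Euler characteristic yields Seifert manifolds covered by \cite{HP-Nil,HP-Sol} --- this induced action is faithful and minimal, and the transverse structure of the horizontal foliation $\cW^{cs}$ (together with Candel's uniformization theorem applied to the Gromov hyperbolic leaves, cf.\ \S\ref{ss.Gromov}) allows one to identify it, up to conjugacy, with the boundary action of a cocompact Fuchsian group on $\partial\HH^2$. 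Tukia-type rigidity for non-elementary convergence actions then implies that the centralizer of $\pi_1^{\mathrm{orb}}(\Sigma)$ in $\mathrm{Homeo}^+(\SS^1)$ is at most finite cyclic, so $\ft$ acts on $\SS^1$ as a finite-order rotation, giving $\rho \in \QQ$ and completing the proof.
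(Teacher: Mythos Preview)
The paper, being a research announcement, does not provide a proof of Proposition~\ref{p.changelift}, so there is nothing to compare your argument against directly. Your overall strategy is the natural one: use the dichotomy of Proposition~\ref{p.dichotomy}, and in the translation case pass to the circle $\SS^1=\widetilde\cL/\langle\mathfrak{c}\rangle$ (which exists since horizontality makes $\mathfrak{c}$ act freely on $\widetilde\cL$) and show that the induced map has rational rotation number. Your check that $\mathfrak{c}^k\circ\ft^\ell$ remains a good lift is correct. One small slip: commuting with a nontrivial translation of $\RR$ does not force a homeomorphism to be a translation (consider $x\mapsto x+\epsilon\sin(2\pi x/\tau)$), but you do not actually need $\mathfrak{c}$ to be a literal translation, only that it act freely.

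The real gap is in the rationality step. You assert that Candel's uniformization identifies the $\Gamma$-action on the leaf-space circle $\widetilde\cL/\langle\mathfrak{c}\rangle$ with a Fuchsian action on $\partial\HH^2$, and then invoke convergence-group rigidity. But Candel's theorem puts hyperbolic metrics on the \emph{leaves}; the circle it produces is $\partial_\infty L$ for a leaf $L$, which is a different object from the transverse leaf-space circle. A horizontal foliation on a circle bundle over $\Sigma$ corresponds to a representation $\pi_1^{\mathrm{orb}}(\Sigma)\to\mathrm{Homeo}^+(\SS^1)$ of the appropriate Euler number, and there is no reason for this to be semi-conjugate to a Fuchsian one unless that Euler number is extremal, which you have not established. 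A cleaner route: minimality of $\cW^{cs}$ makes the $\Gamma$-action on $\SS^1$ minimal, so if the induced circle map $g$ had irrational rotation number it would itself be minimal (its unique minimal set is $\Gamma$-invariant, hence all of $\SS^1$) and thus conjugate to an irrational rotation; then $\Gamma$ would land in $SO(2)$, producing a holonomy-invariant transverse measure for $\cW^{cs}$, which partial hyperbolicity (uniform expansion along the transverse $E^u$) forbids. Equivalently, one may cite the general fact that a minimal, non-equicontinuous group action on the circle has finite cyclic centralizer in $\mathrm{Homeo}^+(\SS^1)$.
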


\subsection{Putting information together}\label{ss.seif}
Using Proposition \ref{p.changelift} we can choose an iterate $f^k$ of $f$ which admits two lifts $\ft_1$ and $\ft_2$ one of which 
fixes all leaves of $\widetilde \cW^{cs}$ and the other fixes 
every leaf of $\widetilde \cW^{cu}$. (Notice that we could apply directly Addendum \ref{add-key} but we rather explain this slightly longer argument that is generalizable to the non dynamically coherent setting.)

Assuming that the lifts do not coincide (i.e.~we are in the `mixed behavior case') one gets a contradiction by applying Proposition \ref{prop.Key} to both lifts. Indeed, the proposition implies that all periodic center leaves must be both coarsely contracted and coarsely expanded by $f^k$, and since the proposition also ensures the existence of at least one periodic center leaf, we get a contradiction. This gives a lift $\ft$ of $f^k$ which leafwise fixes every leaf of both $\widetilde \cW^{cs}$ and $\widetilde \cW^{cu}$. 

Once this is obtained, we argue as in subsection \ref{ss.doublyinvariant}: it is possible to show that the set of fixed center leaves is either everything or empty, and in the latter case one can again apply Proposition \ref{prop.Key} to both foliations to get a contradiction. This shows that every center leaf is 
fixed by $\ft$ and this is enough to complete the proof of Theorem \ref{thmA}.

%%%%%%%%%%%%%%%%%%%%%%%%%%%%%%%%%%%%%%%%
\section{Hyperbolic Manifolds} 
In this section we explain the main tools that need to be added to work out the case of hyperbolic 3-manifolds and $f$ dynamically
coherent  (Theorem \ref{thmB}). Some of the arguments can be carried out in more generality (e.g.~without assuming dynamical coherence) but others use dynamical coherence in a crucial way as we will explain below.

\subsection{Uniform foliations and transverse pseudo-Anosov flows}
Following \cite{Thurston} (see also \cite{Calegari,Fen2002}) we say that a foliation $\cF$ of a 3-manifold $M$ is $\RR$-covered and
\emph{uniform} if the following two properties hold: 
\begin{itemize}
\item The leaf space $\cL:= \mt/_{\widetilde \cF}$ is homeomorphic to $\mathbb{R}$ and,
\item for every pair of leaves $L,L' \in \widetilde \cF$, there exists $K>0$ such that the Hausdorff distance between $L$ and $L'$ is less than $K$. 
%\marg{R: Does the second property implies the first?}
\end{itemize}

When $M$ is obtained as the suspension of a pseudo-Anosov diffeomorphism of a surface $S$ (i.e.~$M = S \times [0,1]/_{(x,0) \sim (\varphi(x),1)}$) it is clear that the foliation by fibers $S \times \{t\}$ is
$\RR$-covered and  uniform, and admits a transverse pseudo-Anosov flow. This is an instance of a much more general result dealing with general uniform foliations in hyperbolic 3-manifolds:   

\begin{teo}[Thurston, Calegari, Fenley \cite{Thurston, CalegariPA,Fen2002}]\label{t.transversePA} 
If $\cF$ is a transversely orientable, 
$\RR$-covered and  uniform foliation in a hyperbolic 3-manifold $M$ then it admits a \emph{regulating} transverse pseudo-Anosov flow $\Phi$. 
\end{teo}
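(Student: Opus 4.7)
The plan is to build the transverse pseudo-Anosov flow via a universal circle construction: extract asymptotic data from the leaves of $\widetilde \cF$, use uniformity to organize this data on a single circle, and then upgrade a resulting pair of transverse laminations into the desired flow. First I would check that since $M$ is hyperbolic, $\cF$ has no compact leaves and admits no transverse invariant measure; thus Theorem \ref{t.Gromov} together with Candel's uniformization theorem equips each leaf $L$ of $\widetilde \cF$ with a canonical hyperbolic metric and a well-defined circle at infinity $S^1_\infty(L)$.

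Next, I would exploit uniformity: since any two leaves $L,L'$ of $\widetilde \cF$ lie within bounded Hausdorff distance, nearest-point projection $L \to L'$ is a quasi-isometry in the leafwise metrics, and extends to a canonical continuous surjection $S^1_\infty(L) \to S^1_\infty(L')$. Using the $\RR$-leaf-space to amalgamate these surjections, one constructs a \emph{universal circle} $S^1_u$ equipped with coherent surjections onto every $S^1_\infty(L)$. Deck transformations permute leaves while preserving the structure, so $\pi_1(M)$ acts on $S^1_u$ by homeomorphisms. The intervals collapsed by the projections $S^1_u \to S^1_\infty(L)$, recorded as $L$ varies across the leaf space, organize into a pair of $\pi_1(M)$-invariant transverse laminations $\Lambda^\pm$ on $S^1_u$; the absence of a transverse invariant measure together with the uniformity hypothesis are what force $\Lambda^\pm$ to be essential and, after more work, very full (complementary regions are finite-sided ideal polygons).

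The main obstacle, and the genuinely hard step, is to promote the combinatorial data $\Lambda^\pm$ to an honest pseudo-Anosov flow on $M$. Following the approach pioneered by Thurston and developed by Calegari and Fenley, I would collapse complementary regions of $\Lambda^\pm$ to singular points in order to produce a pair of singular (measured) foliations on each leaf of $\widetilde \cF$; their transverse intersections, taken coherently across all leaves and parameterized by the $\RR$-leaf-space, define the orbits of a lifted flow $\widetilde \Phi$ transverse to $\widetilde \cF$. One then verifies that $\widetilde \Phi$ is $\pi_1(M)$-equivariant and descends to a pseudo-Anosov flow $\Phi$ on $M$. The regulating property — each orbit of $\widetilde \Phi$ meets each leaf of $\widetilde \cF$ exactly once — is built into the construction, since every orbit is literally parameterized by the $\RR$-leaf-space through a unique intersection per leaf. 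The delicate point, where hyperbolicity of $M$ is used crucially (via Gromov hyperbolicity of leaves and the absence of transverse invariant measures), is ensuring that the collapsing of complementary regions is compatible across leaves so as to produce a genuinely continuous flow on $M$, rather than a flow on some pathological quotient, and that the resulting singularity structure is that of a pseudo-Anosov flow.
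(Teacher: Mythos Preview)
The paper does not prove this theorem at all: it is stated as a known result attributed to Thurston, Calegari, and Fenley, with references to \cite{Thurston, CalegariPA, Fen2002}, and is used as a black box in the subsequent arguments. Your proposal is a reasonable outline of the actual proof strategy found in those references --- the universal circle construction, the pair of invariant laminations, and the collapsing to obtain a regulating pseudo-Anosov flow --- so in that sense you are sketching the ``right'' proof, just not one that appears in this paper.
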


By \emph{regulating} we mean that every orbit of $\widetilde \Phi$ in $\mt$ 
intersects every leaf of $\widetilde \cF$. Being transverse just says that orbits of $\Phi$ are transverse to $\cF$. In our proof, we use this result in an essential way to eliminate the double translation case in hyperbolic 3-manifolds. We do it by comparing the dynamics $f$ with that of the pseudo-Anosov flow $\Phi$.

\subsection{Forcing a particular type of dynamics on periodic center leaves} 

\begin{prop}\label{prop-translation-expanding} 
Let $f\colon M \to M$ be a dynamically coherent partially hyperbolic diffeomorphism of a hyperbolic manifold $M$ such that $\ft$ acts as a translation on $\widetilde \cW^{cs}$ then, there is a periodic center leaf which is coarsely expanding. 
\end{prop}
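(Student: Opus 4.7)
The plan is to use the regulating transverse pseudo-Anosov flow provided by Theorem \ref{t.transversePA} to produce a periodic center leaf whose forward dynamics is forced to be coarsely expanding.

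By Proposition \ref{p.dichotomy} the foliation $\cW^{cs}$ is $\RR$-covered and uniform, so Theorem \ref{t.transversePA} supplies a regulating transverse pseudo-Anosov flow $\Phi$. Since $\ft$ commutes with deck transformations and acts on the leaf space $\widetilde \cW^{cs} \simeq \RR$ as a nontrivial translation, the $\pi_1(M)$-action on this $\RR$ descends to an action on the circle $\SS^1 := \RR/\langle \ft \rangle$. For any periodic orbit $\alpha$ of $\Phi$ with associated deck transformation $\gamma$ stabilizing a lift $\widetilde\alpha$, the regulating property forces $\gamma$ to act on $\RR$ as a nontrivial translation, and hence on $\SS^1$ as a rotation. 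Using the abundance of periodic orbits of $\Phi$ together with standard facts about $\pi_1(M)$-actions on $\SS^1$, I would locate one such $\alpha$ for which $\gamma$ has rational rotation number; this yields integers $k>0,m$ with $H := \gamma^{-m}\ft^k$ preserving every leaf of $\widetilde \cW^{cs}$.

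Next I would find a fixed center leaf of $H$ inside the leaf $L \in \widetilde \cW^{cs}$ meeting $\widetilde\alpha$. The leaf $L$ is Gromov hyperbolic by \S\ref{ss.Gromov}, and $\gamma^m$ acts on $\widetilde M = \HH^3$ as a hyperbolic isometry with axis $\widetilde\alpha$ meeting $L$ transversely at a single point $p$. Near $p$, the $\gamma^m$-dynamics on $L$ has a transverse expanding eigendirection coming from the weak unstable foliation of $\Phi$. Combined with the bounded displacement of $\ft$ in the leaf metric of $L$ (via property L1 and the $\RR$-covered uniform structure), the map $H$ can be shown to fix a center leaf $\widetilde c \subset L$ through $p$. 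Its projection $c = \pi(\widetilde c)$ is then a periodic center leaf for $f^k$, in the sense $\ft^k(\widetilde c) = \gamma^m \widetilde c$.

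Finally, I would verify coarse expansion on $c$. Under dynamical coherence, $\cW^c$ is a genuine $1$-dimensional foliation, and inside $L$ it is transverse to the stable foliation $\cW^s$. Since the expanding direction of $\gamma^m$ in $L$ is also transverse to $\cW^s$, the tangent direction of $\widetilde c$ at $p$ must coincide with this expanding direction. Hence the restricted map $H|_{\widetilde c}$ has its fixed points contained in a bounded interval and pushes all other points outward, which translates exactly to coarse expansion of $f^k$ on $c$.

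The hardest step will be the second one: locating the fixed center leaf of $H$ in $L$, and in particular matching the expanding transverse eigendirection of $\gamma^m$ with the tangent direction of $\cW^c$. This alignment relies essentially on dynamical coherence — without a true center foliation, one cannot track how $H$ permutes center leaves and the argument fails, which is precisely why the proposition is stated only in the dynamically coherent setting.
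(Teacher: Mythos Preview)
Your outline tracks the paper's setup---invoke Proposition~\ref{p.dichotomy} and Theorem~\ref{t.transversePA}, pick a periodic orbit of $\Phi$, and form $h = \gamma^{k}\ft^{\ell}$ fixing a center-stable leaf $L$---but the argument for coarse expansion has a genuine gap. You conflate the deck transformation $\gamma^{m}$ with a self-map of $L$: as an isometry of $\widetilde M$, $\gamma^{m}$ fixes no point of $L$ (its geodesic axis is not the flow line $\widetilde\alpha$) and has no ``expanding eigendirection''. The object that does carry local hyperbolic dynamics at $p = \widetilde\alpha \cap L$ is the holonomy $\tau_{cs}\colon L \to L$, obtained by flowing along $\widetilde\Phi$ and then composing with the deck transformation; this is the map to which $h$ should be compared. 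Even granting that correction, your claim that the center direction at $p$ ``must coincide'' with the expanding direction because both are transverse to $\cW^{s}$ is a non sequitur: almost every direction in $T_{p}L$ is transverse to $E^{s}$, so nothing forces any alignment.

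The paper's proof does not attempt such an alignment. The decisive mechanism---entirely absent from your proposal---is the Lefschetz fixed point theorem. Since $h$ and $\tau_{cs}$ are a bounded distance apart on $L$, they have the same total fixed-point index; the pseudo-Anosov structure makes the fixed points of $\tau_{cs}$ saddle-like, so this total is negative. On the other hand, fixed points of $h$ in $L$ lie on $h$-fixed center leaves, and because the stable direction in $L$ is always contracting, a coarsely contracting center leaf contributes index $+1$ while a coarsely expanding one contributes index $-1$. Negativity of the total therefore forces at least one coarsely expanding periodic center leaf. (As a side remark, your closing paragraph inverts the role of dynamical coherence: the paper indicates that it is Proposition~\ref{prop-contracting-nonDC}, not the present proposition, whose proof depends on coherence in an essential way.)
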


%\begin{add}\label{add-trans-exp}
%Let $f\colon M \to M$ be a dynamically coherent partially hyperbolic diffeomorphism of a hyperbolic manifold $M$ such that $\tilde f$ acts as a translation on $\widetilde \cW^{cs}$ then,  periodic center with coarsely expanding behaviour has limit points at the boundary at infinity of $\tilde M$ which are different. \marg{R: Not clear if true, rather probably false. What other information can we extract from the pA flow to be able to contradict Proposition \ref{prop.Key} and its addendum in the case where ....}
%\end{add}

\begin{proof}[Sketch of the proof of Proposition \ref{prop-translation-expanding}]
Recall that, according to our dichotomy result (Proposition \ref{p.dichotomy}), if $\ft$ acts as a translation on $\widetilde \cW^{cs}$ then $\cW^{cs}$ is $\RR$-covered and uniform. Let $\Phi_{cs}$ be the regulating transverse pseudo-Anosov flow given by Theorem \ref{t.transversePA}. 

Consider $\gamma$ a periodic orbit of $\Phi_{cs}$ and write $\gamma$ again for an associated deck transformation.

The first step in the proof consists in showing that there exist $\ell>0$ and $k \in \ZZ \setminus \{0\}$ such that $h = \gamma^k \circ \ft^\ell$ fixes a leaf 
$L \in \widetilde \cW^{cs}$. This is shown using that $\ft$ is a \emph{bounded distance from $\widetilde \Phi_{cs}$}, understood in the following way: Flowing along $\widetilde \Phi_{cs}$ defines a homeomorphism between the leaf
$L$ and $\ft(L)$ and this homeomorphism is a bounded distance
from the map $\ft|_L$ from $L$ to $\ft(L)$.
After some involved arguments we obtain a compact 
$\ft/\gamma$ invariant subset in 
$\mt/_{\gamma}$. Then, using recurrence and partial hyperbolicity, we get the desired periodic center stable leaf. 

Once this is obtained, we use Lefschetz fixed point theorem to compare the indices of fixed points of $h$ in $L$ with the corresponding first return map of the flow $\widetilde \Phi_{cs}$. This forces the existence of at least one fixed center leaf whose index is negative, which produces the desired coarsely expanding leaf (because the transverse behavior is contracting because $L$ is a center-stable leaf). 
\end{proof}

\subsection{Obstructions to dynamical coherence, no double translation}

\begin{prop}\label{prop-contracting-nonDC}
Let $f\colon M \to M$ be a dynamically coherent partially hyperbolic diffeomorphism of a hyperbolic $3$-manifold $M$ such that $\ft$ acts as a translation on 
$\widetilde \cW^{cs}$. Then $f$ cannot have any coarsely contracting center leaf.  
\end{prop}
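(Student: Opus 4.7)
The plan is to derive a contradiction: suppose $c$ is a coarsely contracting periodic center leaf with $f^k(c)=c$. The strategy mirrors the proof of Proposition \ref{prop-translation-expanding}, running a Lefschetz fixed-point count on a suitable quotient of a fixed center-stable leaf, but now with the opposite sign forced on the center by the coarse contraction, producing an incompatibility with the index structure inherited from the regulating pseudo-Anosov flow.

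First I would lift $c$ to $\tilde c \subset \tilde L \in \widetilde \cW^{cs}$ and, since $\ft$ translates the cs-leaf space, pick a deck transformation $\gamma \ne \mathrm{id}$ so that $g := \gamma \circ \ft^k$ setwise preserves $\tilde c$ (hence $\tilde L$). Because $c$ is a line, $\tilde c \to c$ is a homeomorphism, so the fixed points of $g$ on $\tilde c$ correspond to those of $f^k|_c$; they form a non-empty bounded set $\tilde I$, and every point of $\tilde c \setminus \tilde I$ is attracted to $\tilde I$ under forward iteration of $g$. Since $\ft^k$ contracts stable leaves and $\gamma$ is an isometry of $\mt$, $g$ contracts the stable foliation of $\tilde L$ as well. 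Combined with the attracting character of $g$ on the ends of $\tilde c$, a compactification-at-infinity argument shows that the $2$-dimensional fixed-point indices of $g$ along $\tilde c$ sum to exactly $+2$, regardless of the detailed structure at individual fixed points.

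Next I would follow the setup in Proposition \ref{prop-translation-expanding}: the bounded-distance relation between $\ft|_{\tilde L}$ and the $\widetilde \Phi_{cs}$-flow maps between cs-leaves, given by Theorem \ref{t.transversePA}, forces $\gamma$ (after possibly replacing $f$ by a power) to be of the type associated with a periodic orbit of $\Phi_{cs}$. By Theorem \ref{t.Gromov} and $f$-minimality, $\tilde L$ is Gromov hyperbolic and $\gamma$ acts on it as a hyperbolic isometry, so the quotient $A := \tilde L/\langle \gamma \rangle$ is a topological annulus and $g$ descends to $\bar g \colon A \to A$ whose Lefschetz number equals $\chi(A)=0$. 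The projection of $\tilde c$ contributes $+2$ to this count, so the fixed points of $\bar g$ off of it must together contribute $-2$. I would then argue that this cannot happen: the bounded-distance control with $\widetilde \Phi_{cs}$ localizes the extra fixed points near the $\gamma$-axis and constrains their index via the pseudo-Anosov singular structure, and dynamical coherence (the $g$-invariance of the $\widetilde \cW^{cu}$-leaves through each such point, without branching) excludes precisely the configurations needed to supply the missing $-2$.

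The main obstacle is this last compatibility step. The bounded distance between $\ft$ and $\widetilde \Phi_{cs}$ does not provide a fixed-point-by-fixed-point identification, so one must combine partial hyperbolicity (to confine the extra fixed points to a neighbourhood of the $\gamma$-axis) with dynamical coherence (which rules out the branching configurations that, in the non-coherent examples of \cite{HHU-noncoherent,BGHP}, are exactly what produce the compensating saddles). Getting this final step right is where the dynamical coherence hypothesis in the statement of the proposition enters in an essential way, and is the technical heart of the argument.
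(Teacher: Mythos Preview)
Your overall strategy---set up an index count on a fixed $cs$-leaf and show that a coarsely contracting center contributes an index that cannot be cancelled---has the right shape, but the execution diverges from the paper's argument and contains real gaps.

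The paper does \emph{not} pass to the quotient annulus $\tilde L/\langle\gamma\rangle$. It works instead with the Gromov compactification $\overline L=\tilde L\cup\partial_\infty L$ and the map $\tau_{cs}\colon L\to L$ obtained by composing the $\widetilde\Phi_{cs}$-holonomy from $L$ to $\gamma^{-1}L$ with $\gamma$. Since $h$ and $\tau_{cs}$ are a bounded distance apart in $L$, they induce the \emph{same} homeomorphism of the ideal circle $\partial_\infty L$, and the heart of the argument is a fine analysis of this circle action: one shows that each contracting ray of $\tilde c$, and each $h$-periodic stable leaf, lands on a \emph{repelling} fixed point of $\tau_{cs}$ in $\partial_\infty L$. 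Because $\tau_{cs}$ has only finitely many attracting ideal fixed points, there are not enough ``slots'' at infinity to accommodate the coarsely expanding centers that would be needed to cancel the positive index of $\tilde c$---unless distinct center leaves share ideal endpoints and hence merge, which is precisely what dynamical coherence forbids. Your quotient-annulus formulation hides this mechanism; the appeal to ``pseudo-Anosov singular structure'' and ``localizing fixed points near the $\gamma$-axis'' does not supply it.

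Two further specific issues. First, you assert that after passing to a power one may force $\gamma$ to correspond to a periodic orbit of $\Phi_{cs}$. The paper explicitly splits into \emph{two} cases---$\gamma$ represents a periodic $\Phi_{cs}$-orbit or it does not---because here $\gamma$ is dictated by the given periodic center leaf $c$, not chosen freely as in the proof of Proposition~\ref{prop-translation-expanding}. Second, your claim that the indices along $\tilde c$ sum to $+2$ is suspect: a coarsely contracting center together with a transversally contracting stable direction is sink-like, so the total two-dimensional index of the fixed set on $\tilde c$ is $+1$ (consistent with ``negative index $\Leftrightarrow$ coarsely expanding'' in Proposition~\ref{prop-translation-expanding}). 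On the open annulus $A$ the Lefschetz number is not even defined without a compactification, and any sensible compactification forces you back to analysing the dynamics on $\partial_\infty L$---exactly the step you have elided.
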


%\begin{add}\label{add-contracting-DC}
%Moreover, in each leaf of $\tilde \cW^{cs}$ there are at most finitely many coarsely expanding center leaves. 
%\end{add}

If $f$ is dynamically coherent, then putting Proposition \ref{prop-translation-expanding} applied to 
$\widetilde \cW^{cu}$ together with Proposition \ref{prop-contracting-nonDC} 
applied to $\widetilde \cW^{cs}$ yields that the double translation case cannot happen.  Unfortunately, our proof of this result uses dynamical coherence in a crucial way, as we will see in the sketch below.

\begin{proof}[Sketch of the proof of Proposition \ref{prop-contracting-nonDC}]
Let $L$ be a center stable leaf of
$\widetilde \cW^{cs}$
fixed by $h = \gamma \circ \ft^\ell$ for some deck 
transformation $\gamma$ (in the terminology of the previous
proposition $\gamma = \gamma^k$). 
Assume that $L$ contains a coarsely contracting fixed center stable leaf $c$. 

This proof requires a finer study of the dynamics forced by 
$\widetilde \Phi_{cs}$ on 
$L$. We separate this study in two cases, determined by whether or not $\gamma$ corresponds to a periodic orbit of $\Phi_{cs}$. Both are very similar so we only sketch the case where $\gamma$ does correspond to a periodic orbit. 

In this case, we start by showing that the contracting rays of the center leaf $c$ must accumulate (in $\partial_{\infty}L$, the boundary at infinity of the leaf $L$) on points which are repelling for the action of $\tau_{cs}$ on the boundary, where $\tau_{cs}\colon L \to L$ is the map obtained by composing the holonomy along $\tilde \Phi_{cs}$-orbits from $L$ to $\gamma^{-1}L$ with $\gamma$.  Similarly, any stable manifold that is periodic under $h$ also accumulates only
on repelling points of $\tau_{cs}$ on $\partial_{\infty}L$. 
Notice that $h$ and $\tau_{cs}$ are homeomorphisms of $L$ a bounded
distance from each other, so induce the same action on $\partial_{\infty}L$.

An index counting argument shows that it is impossible to compensate the (positive) index contributed by the coarsely contracting center leaf with other coarsely expanding centers (because there are only finitely many contracting points at infinity) 
unless some center leaves merge, contradicting dynamical coherence. Notice that this type of merging for non-dynamically coherent diffeomorphisms actually appears in examples, as in \cite[Section 5]{BGHP}.
\end{proof}

\subsection{No mixed behavior} 
 The impossibility of having mixed behavior is proven, for dynamically coherent diffeomorphisms, by Addendum \ref{add-key}. 
As we previously mentioned, we are also able to eliminate mixed behavior on hyperbolic manifolds even in the non-dynamically coherent case.

However, our argument is very specific to hyperbolic manifolds (since we use the existence of a transverse pseudo-Anosov flow) and considerably more delicate than the dynamically coherent case. At best, this argument could be extended to manifolds with at least one atoroidal piece (see below). 
%Since Seifert manifolds are already dealt with, one would be left with the case of graph-manifolds, which remains unclear to us.
%\marg{S: I would remove the phrase starting with "Since". We don't
%really know much at this point, so let's be much less specific. \blue{T: ok.}}

%If mixed behaviour was produced, say $\tilde \cW^{cs}$ is leafwise fixed by $\tilde f$ and $\tilde \cW^{cu}$ is uniform and $\tilde f$ acts as a translation. Using Proposition \ref{prop.Key} and its Addendum \ref{add-Key} we get countably many center leaves $\tilde c_n$ fixed by a deck transformation $h = \gamma \circ \tilde f^k$. By Addendum \ref{add-contracting-DC} applied to $\tilde \cW^{cu}$ one can see that countably many leaves of $\tilde \cW^{cu}$ are fixed by $h$. 

\subsection{Doubly invariant case} 

This follows exactly as in the Seifert case (cf.~subsection \ref{ss.seif}). 
%%%%%%%%%%%%%%%%%%%%%%%%%%%%%%%%%%%

\section{Extensions and limits of our arguments}
% \section{Beyond dynamical coherence} 

\subsection{Beyond dynamical coherence}
When $f$ is not necessarily dynamically coherent, one can use, instead of foliations, the branching foliations introduced in \cite{BI} (see also \cite[Section 4]{HP-survey}). After a substantial amount of preparation, 
and suitable reinterpretation of objects (like leaf spaces),
most of the arguments of the dynamically coherent situation extend
to the non dynamically coherent setting.
Some properties require different and involved arguments, notably showing that in the hyperbolic and Seifert case the branching foliations are $f$-minimal, and we sometimes need additional hypothesis. One step that we are so far unable to complete is to remove dynamical coherence from the assumptions in Proposition \ref{prop-contracting-nonDC} --- that is when 
$M$ is hyperbolic. Indeed, the type of configuration that we obtain using our arguments turns out to be very similar to what actually happens in the non-dynamically coherent examples constructed in \cite{BGHP} in some Seifert manifolds.
Therefore, it is unclear whether this situation in hyperbolic
manifolds can really be ruled out.

Notice that once we can prove double invariance, then
dynamical coherence follows after the fact: Once we have shown that all branching leaves, as well as the connected components of their intersections are fixed, we can deduce that the branching foliations are true foliations, i.e.~the partially hyperbolic diffeomorphism is dynamically coherent.
Since all our arguments can be extended to the non dynamically coherent case when $M$ is 
Seifert fibered, we obtain Theorem \ref{thmA}.

% \subsection{Analysis of double translation in hyperbolic manifolds}

\subsection{Double translation in hyperbolic manifolds}

One particularly obvious gap so far is our inability to either prove or disprove the existence of a \emph{double translation} example in hyperbolic manifolds (such examples would necessarily be non dynamically coherent).
% A natural next step in our project 
% would be to confirm or eliminate the possibility of having a \emph{double translation} example in hyperbolic manifolds (such examples would necessarily be non dynamically coherent.

We previously explained why our method has failed so far, but to understand the intricacy of this problem, the reader can meditate on the following example: Let $\phi^t$ be a
(smooth) Anosov flow on a hyperbolic manifold $M$, such that its (say) weak stable ($2$-dimensional) foliation is $\RR$-covered (such flows are called $\RR$-covered Anosov flows).
Since the flow is $\RR$-covered, there exists a map $\eta\colon M \rightarrow M$ that conjugates $\phi^t$ with its inverse $\phi^{-t}$ (see, for instance \cite[Proposition 2.4]{BarthelmeFenley17} for a description of $\eta$). In particular, $\eta$ preserves the weak stable and weak unstable foliations of $\phi^t$, and acts as a translation on both leaf spaces. 

If $\eta$ was $C^1$, then it would be easy to show that, for a time $T_0$ big enough, the map $f = \phi^{T_0}\circ \eta^2$ would be a partially hyperbolic diffeomorphism. Moreover, one can easily deduce from known facts about Anosov flows in dimension $3$ that $f$ could not be leaf conjugate to a time-$1$ map of an Anosov flow. This is essentially
because the only Anosov flows  that can be transverse to a $\RR$-covered foliation are orbit equivalent to suspensions. 
Since $f$ preserves the weak stable and weak unstable foliations of $\phi^t$, these foliations must be the center stable and center unstable foliations. In particular, $f$ would be dynamically coherent and act as a double translation, in contradiction with Theorem \ref{thmB}. 

It follows that $\eta$ cannot be $C^1$ (actually, Barbot \cite[Proposition 6.6]{BarbotHDR} proved that, in general, the map $\eta$ is $C^1$ if and only if the flow $\phi^t$ is a lift of a geodesic flow).

A natural question is: \emph{Does there exists a $C^1$-map $h$, $C^0$-close to $\eta$, and such that $h$ sends the strong unstable (resp.~stable) leaves of $\phi^t$ to curves transverse to the weak stable (resp.~unstable) foliation of $\phi^t$?} If the answer is yes, then, for big enough $T_0$, the map $\phi^{T_0}\circ h$ will be a partially hyperbolic diffeomorphism (see e.g.~\cite[Section 2]{BGHP}), homotopic to the identity, not the time-1 map of an Anosov flow, and acting as a double translation (hence not dynamically coherent).

% \section{More general manifolds} 
\subsection{More general manifolds}

Seifert and hyperbolic 3-manifolds are a large part of 
the family of irreducible 3-manifolds (which are the only ones that can 
admit partially hyperbolic diffeomorphisms \cite{Nov,BI}). 
However, the case of general irreducible 3-manifolds, even under the assumption of being homotopic to identity still requires further work (though as we have mentioned, our results also provide some progress in this general case). 

 Several arguments, starting with the dichotomy, require $f$-minimality of the (branching) foliations (which can be obtained in the contexts of Theorems \ref{thmA} and \ref{thmB}). Even assuming minimality some arguments do not carry directly in general. Notably we do not know how to rule out the double translation case in general. We remark, though, that it is reasonable to expect an analogue of Theorem \ref{t.transversePA} in the context of manifolds whose JSJ decomposition contains at least one atoroidal piece. Indeed, it is believed (see, for instance, \cite[Remark 5.3.17]{CalegariPA}) that a regulating flow that behaves like a pseudo-Anosov inside the atoroidal piece do exist, extending Theorem \ref{t.transversePA} to that setting. 
This, together with additional work, might be enough
to extend Propositions \ref{prop-translation-expanding} and \ref{prop-contracting-nonDC} in this setting.
The remaining case is when $M$ does not have atoroidal pieces, but $M$ is not Seifert, that is, when $M$ is a graph manifold. In that case there is no ``pseudo-Anosov"-like
flow in any piece, nor does the fundamental 
group admit a center, which prevents us to use our Seifert trick. Hence to analyze this case, one will need new ideas.
%\marg{{\color{red}  S: Changed wording. Check whether OK.} \blue{T: changed a little bit more. Check if ok.}{\color{red} \  \ S: OK} }

%\marg{S: The following phrase is much more questionable
%" The case of graph-manifolds on the other hand is wide open."
%\blue{T: It is wide open in the sense that none of our techniques have any hope to be extended to that setting. Maybe it is easy, but it is not going to be proved using a regulating pA, or by changing the lift...}}
%

\end{document}